\numberwithin{equation}{section}
\theoremstyle{plain}
\newtheorem{thm}{Theorem}[section]  
\newtheorem{prop}[thm]{Proposition} 
\newtheorem{lem}[thm]{Lemma}
\newtheorem{cor}[thm]{Corollary}
\theoremstyle{definition}
\newtheorem{remark}[thm]{Remark}
\newtheorem{notation}[thm]{Notation}
\newcommand{\lap}{\Delta}
\newcommand{\eps}{\epsilon}
\newcommand{\df}{\coloneqq}
\newcommand{\eqn}[1]{\begin{equation*} #1 \end{equation*}}
\newcommand{\eqnum}[1]{\begin{equation} #1 \end{equation}}
\newcommand{\eqtext}[1]{\qquad \text{#1}}
\newcommand{\noi}{\noindent}
\newcommand{\parnoi}{\par \noi}
\newcommand{\pa}{\partial}
\newcommand{\RR}{\mathbb{R}}
\newcommand{\NN}{\mathbb{N}}
\newcommand{\BB}{\mathbb{B}}
\newcommand{\lm}{\lambda}
\newcommand{\Lm}{\Lambda}
\newcommand{\sg}{\sigma}
\newcommand{\om}{\omega}
\newcommand{\Om}{\Omega}
\newcommand{\smooth}[1]{\mathcal{C}^\infty(#1)}
\newcommand{\inprod}[3]{\int_{#1}\langle {#2},{#3}\rangle\,\dv_{\hspace{-0.25em}#1}}
\newcommand{\ip}[3]{\langle {#2},{#3}\rangle_{#1}}
\newcommand{\bigip}[2]{\big\langle #1\,,\,#2 \big\rangle}
\newcommand{\Snm}{{\mathbb{S}^{n-1}}}
\newcommand{\inn}[3]{\langle\hspace{-3pt}\langle {#2},{#3}\rangle\hspace{-3pt}\rangle_{#1}}
\DeclareMathOperator{\dv}{d\hspace{-0.1em}V}
\DeclareMathOperator{\dr}{dr}
\DeclareMathOperator{\dt}{dt}
\newcommand{\dvm}{\dv_{\hspace{-0.25em}M}}
\newcommand{\dvpam}{\dv_{\hspace{-0.25em}\pa M}}
\newcommand{\dvs}{\dv_{\hspace{-0.15em}\Snm}}
\newcommand{\karsg}{\bm{\sg}}
\newcommand{\karsgp}{\karsg^{(p)}}
\newcommand{\karsgpk}{\karsgp_k}
\newcommand{\karsgpkk}{\karsgp_{(k)}}
\newcommand{\karsgpkkp}{\karsgp_{(k+1)}}
\newcommand{\lmp}{\lm^{(p)}}
\newcommand{\lmpk}{\lmp_k}
\newcommand{\lmpkk}{\lmp_{(k)}}
\newcommand{\lmpkkp}{\lmp_{(k+1)}}
\newcommand{\lmpmm}{\lmp_{(m)}}
\newcommand{\p}{^{(p)}}
\newcommand{\pp}{^{(p+1)}}
\newcommand{\pam}{{\pa M}}
\newcommand{\half}{\frac{1}{2}}
\DeclareMathOperator{\sect}{Sect}
\DeclareMathOperator{\ric}{Ric}
\DeclareMathOperator{\wz}{W}
\DeclareMathAlphabet{\mathpzc}{OT1}{pzc}{m}{it}
\newcommand{\normal}{{\mathpzc{n}}}
\newcommand{\ray}{\mathcal{R}}
\newcommand{\bn}{{\BB^n}}
\newcommand{\phii}{\varphi}
\def\subsection{\@startsection{subsection}{2}
  \z@{.5\linespacing\@plus.7\linespacing}{.1\linespacing}
  {\normalfont\bfseries}}
\subjclass[2020]{58J50, 35P15}
\begin{document}

\title[]{Eigenvalue bounds for the Steklov problem on differential forms in warped product manifolds}

\author{Tirumala Chakradhar}

\address{University of Bristol, School of Mathematics, Fry Building, Woodland Road, Bristol BS8 1UG, U.K.}

\email{tirumala.chakradhar@bristol.ac.uk}

\begin{abstract}
We consider the Steklov problem on differential $p$\nobreakdash-forms defined by M. Karpukhin and present geometric eigenvalue bounds in the setting of warped product manifolds in various scenarios. In particular, we obtain Escobar type lower bounds for warped product manifolds with non-negative Ricci curvature and strictly convex boundary, and certain sharp bounds for hypersurfaces of revolution, among others. We compare and contrast the behaviour with known results in the case of functions (i.e., $0$\nobreakdash-forms), highlighting the influence of the underlying topology on the spectrum for $p$\nobreakdash-forms in general.

\bigskip

\noindent\textsc{R\'esum\'e.}
Nous consid\'erons le probl\`eme de Steklov sur les formes diff\'erentielles de degr\'e~$p$, tel que d\'efini par M. Karpukhin, et pr\'esentons des bornes g\'eom\'etriques des valeurs propres dans le cadre des vari\'et\'es en produit d\'eform\'e dans divers sc\'enarios. En particulier, nous obtenons des bornes inf\'erieures de type Escobar pour les vari\'et\'es en produit d\'eform\'e ayant une courbure de Ricci non n\'egative et une fronti\`ere strictement convexe, ainsi que certaines bornes serr\'ees pour les hypersurfaces de r\'evolution, entre autres. Nous comparons et contrastons le comportement avec les r\'esultats connus dans le cas des fonctions (c'est-\`a-dire des $0$\nobreakdash-formes), en mettant en \'evidence l'influence de la topologie sous-jacente sur le spectre des $p$\nobreakdash-formes en g\'en\'eral.
\end{abstract}

\keywords{Steklov problem, eigenvalue bounds, differential forms, warped products}

\maketitle
\section{Introduction} \label{sec: intro}
\par
Let $(M^n,g)$ be a compact, connected, smooth $n$\nobreakdash-dimensional Riemannian manifold with boundary, and $\lap\df~-\text{div}(\text{grad})$ be the positive Laplacian on $M$. We have the Steklov eigenvalue problem:
\eqn{ \label{steklov for func}
\begin{cases}
\lap u = 0 \qquad &\text{in } M,\\
\pa_\normal u=\sg u  \qquad &\text{on } \pa M,
\end{cases}
}
where $u\in \smooth{M}$ and $\sg$ is the spectral parameter. Here, $\pa_\normal $ denotes the partial derivative with respect to the unit outward normal vector field $\normal$ along the boundary. The Steklov problem is known to have a discrete spectrum $0=\sg_0< \sg_1\leq \sg_2\leq\dots\nearrow +\infty$, with finite multiplicities. It is often formulated in terms of the Dirichlet-to-Neumann (DtN) map \hbox{$\mathcal{D}:\smooth{\pa M}\to \smooth{\pa M}$} given by $u\mapsto \pa_\normal \widetilde u$, where $\widetilde u$ is the unique harmonic extension of $u$ to $M$. It is an elliptic, self-adjoint pesudo-differential operator whose spectrum coincides with the Steklov spectrum. Introduced by Vladimir Steklov~\cite{ste1902} in 1902, the Steklov problem arises in various interesting contexts such as the sloshing problem for liquids, Electrical Impedance Tomography, as well as the study of minimal surfaces. See~\cite{steklov_survey_one,steklov_survey_two} for a detailed survey of the recent advances in understanding the spectral geometry of the Steklov problem.
\par
In this article, we focus on geometric upper and lower bounds for the eigenvalues of the Steklov problem on differential forms. There are various versions of Dirichlet-to-Neumann operators on $p$\nobreakdash-forms in the literature~\cite{JL05,BS08,RS12,SS13,Kar19}, with the ones defined by Raulot\nobreakdash-Savo~\cite{RS12} and Karpukhin~\cite{Kar19} among those that drew attention recently. See \cite{RS14,GKLP22,Xio24,Mic19,YY17,Kwo16} for some interesting geometric eigenvalue estimates for these operators.
\par
The Hodge Laplacian generalises the notion of Laplacian on smooth functions to the framework of differential $p$\nobreakdash-forms on the manifold. The spectral data associated with the Hodge Laplacian, such as that of the absolute eigenvalue problem, the relative eigenvalue problem, etc., capture important geometric and topological properties of the underlying manifold. It turns out that a number of characteristic properties of various eigenvalue problems for functions (i.e., $0$-forms) do not generalise for arbitrary $p\in\{0,1,\dots,n\}$, $n$ being the dimension of the manifold. They show significantly different behavior, where the role of the topology of the manifold is much more eminent in addition to the differential structure. See~\cite{CC90} for a nice demonstration of this phenomenon. While the classical eigenvalue problems involving differential forms are well-studied, the Steklov problem on forms is relatively less explored, with many significant questions waiting to be investigated.
\par
In the case of functions, the recent articles~\cite{CGG19,CV21,Xio21,Xio22,Tsc24,BCG25,BC24} study various eigenvalue bounds for the Steklov problem among warped product manifolds, which are an interesting class of manifolds that are widely used in the literature to gain valuable insights into the behaviour of the spectrum, owing to certain simplifications in computing the eigenvalues and eigenfunctions. We generalise some of these estimates to the setting of the Steklov problem on $p$\nobreakdash-forms defined by Karpukhin, adapting their proofs. As we shall see, a striking feature in some of our results is the contrasting behaviour of eigenvalue bounds for small and large values of $p$, and, for even dimensional manifolds, an isospectrality with the Euclidean ball when $p=\frac{n-2}{2}$. We now motivate and describe our main results. See Section~\ref{sec: prelim} on preliminaries for the required definitions and the notation.
\par
We assume that $(M^n,g)$ is a compact, connected, orientable, smooth Riemannian manifold with boundary, and that $p\in\{0,1,\dots,n-2\}$, in addition to any other restrictions on $p$ stated in various contexts. Let $\karsgp_k$ (resp. $\karsgp_{(k)}$) denote the $k^\text{th}$ \emph{nonzero} eigenvalue of Karpukhin's DtN operator $\Lambda\p$ on coclosed $p$\nobreakdash-forms on $\pam$, counted with (resp. without) multiplicity. For $p=0$, i.e., for functions, $0$ is always an eigenvalue of $\Lambda\p$, so $\karsgp_1=\karsgp_{(1)}$ denotes the second smallest eigenvalue of $\Lambda\p$. In contrast, for $p\in \{1,\dots,n-2\}$, the manifolds that we study in this article (each connected component of whose boundary is the unit round sphere $\Snm$, up to scaling) have $\karsgp_1=\karsgp_{(1)}$ as the smallest eigenvalue as the kernel of $\Lambda\p$ turns out to be trivial for topological reasons.
\par
Escobar~\cite{esc99} conjectured that for a manifold $(M,g)$ with the Ricci curvature, $\ric_g\geq 0$ and strictly convex boundary (i.e., with a positive lower bound, say $\kappa$, on the principal curvature of $\pam$), the first nonzero Steklov eigenvalue for functions satisfies $$\karsg^{(0)}_1\geq \kappa,$$ with the equality holding if and only if $M$ is the Euclidean ball of radius $\kappa^{-1}$. While the conjecture remains open, some partial results are known in the literature. See~\cite{Xio22} for a brief survey, and~\cite{XX23} for the proof of a weaker version of the Escobar conjecture by replacing $\ric_g\geq 0$ with $\sect_g\geq0$ (where $\sect_g$ denotes the sectional curvature). Xiong~\cite[Theorem~2]{Xio22} proved certain Escobar type lower bounds for the $m^\text{th}$ nonzero Steklov eigenvalue for functions, that for warped product manifolds (see Section~\ref{subsec: warped prod mfds}) satisfying Escobar's hypotheses of $\ric\geq 0$ and the principal curvature of the boundary, $\kappa>0$, the following lower bounds are satisfied. $$\karsg^{(0)}_{(m)}\geq m\kappa,\qquad \forall \, m\in\NN,$$ with the equality holding only for the Euclidean ball of radius $\kappa^{-1}$.
\begin{remark}
It follows from the results of Ichida~\cite{ich81} and Kasue~\cite{kas83} that if a compact, connected manifold has $\ric\geq 0$ and a strictly convex boundary, then the boundary is connected. Also, note that the principal curvature is constant on each connected component of the boundary for warped product manifolds.
\end{remark}
\noi
We generalise the above result of Xiong to the Steklov problem on $p$\nobreakdash-forms in the following theorem.
\begin{thm}\label{thm: Xiong-type lower bound for p-forms}
Let $(M^n,g)$ be a warped product manifold as in Section~\ref{subsec: warped prod mfds} ($n\geq 2$), such that $\ric_g\geq 0$ and it has a strictly convex boundary. If $\kappa\in \RR_{>0}$ is the principal curvature of $\pam$, then, for $p\leq \frac{n-1}{2}$, the $m^\text{th}$ nonzero Steklov eigenvalue for $p$\nobreakdash-forms has the lower bound
\eqnum{\label{Xiong-type lower bound for p-forms}
\karsgp_{(m)}(M)\geq (m+p)\kappa, \qquad \forall \, m\in\NN,
}
with the equality holding for a given $m\in \NN$ if and only if $M$ is the Euclidean ball of radius $\kappa^{-1}$.
\end{thm}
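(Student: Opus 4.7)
The plan is to mirror Xiong's proof in the function case, upgraded to $p$-forms by passing through the warped-product decomposition of Karpukhin's DtN spectrum that is presumably established earlier in the paper. On a warped product $M = [0,L] \times_h N$ with $g = dr^2 + h(r)^2 g_N$, separation of variables should express every Steklov eigenvalue $\karsgpk$ as an eigenvalue of an associated radial Sturm--Liouville problem indexed by a coclosed eigenform $\phi$ of the Hodge Laplacian on $N$, say with Hodge Laplacian eigenvalue $\lmpk$. The candidate extension on $M$ takes a form like $f(r)\phi + g(r) \dr \wedge (\ast_N d\phi)$ (or the analogous ansatz for closed-coclosed $\phi$), and the coclosedness plus harmonicity conditions give coupled ODEs for $f, g$ on $[0,L]$ whose admissible solutions determine $\karsgp$ via the boundary value $f'(L)/f(L)$ up to geometric factors depending on $h(L)$ and $p$.

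Next I would translate the geometric hypotheses into ODE coefficients. The condition $\ric_g \geq 0$ for a warped product reduces to $h'' \leq 0$ in the radial direction together with a transverse inequality relating $\ric_{g_N}$, $h$, and $(h')^2$; the strictly convex boundary with principal curvature $\kappa$ yields $h'(L)/h(L) = \kappa$ (or $\geq \kappa$). By the Ichida--Kasue remark, $\pam$ is connected, and for warped products it must be, up to scale, a single round sphere $\Snm$ on which the coclosed $p$-eigenvalues $\lmpkk$ are known explicitly. This reduces the inequality \eqref{Xiong-type lower bound for p-forms} to a purely one-dimensional statement about the radial ODE.

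With the ODE in hand, the core step is a Sturm--Liouville comparison against the Euclidean ball model $h(r) = r$ on $[0,\kappa^{-1}]$, where the coclosed $p$-form Steklov spectrum is exactly $\{(m+p)\kappa : m \in \NN\}$ (the ``$+p$'' arising from the $r^{m+p}$ prefactor in the local expansion of a coclosed $p$-eigenform on $\Snm$ of index $m$). Using $h'' \leq 0$ and the boundary curvature normalization, the standard Sturm comparison shows that the ratio $f'(L)/f(L)$ in the general warped product is at least the Euclidean one for each fixed coclosed boundary eigenform, giving $\karsgpkk \geq (k+p)\kappa$ for every $k$ provided $p \leq \frac{n-1}{2}$. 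The restriction on $p$ should enter when identifying which of the two families of coclosed eigenforms on $\Snm$ (degree-$p$ coexact vs.\ closed) gives the smaller Steklov eigenvalue; for $p \leq \frac{n-1}{2}$ the $+p$ branch dominates, whereas for larger $p$ one would expect $+(n-1-p)$ to appear. Finally, $\karsgp_{(m)}$ being the $m^\text{th}$ distinct nonzero eigenvalue then gives \eqref{Xiong-type lower bound for p-forms}.

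For the rigidity: if equality $\karsgp_{(m)} = (m+p)\kappa$ is attained for some $m$, then the Sturm comparison must be saturated on all of $[0,L]$, which forces $h'' \equiv 0$, hence $h(r) = r$ after rescaling, and simultaneously the transverse Ricci inequality must be saturated, forcing $N = \Snm$ round. Thus $M = \BB^n(\kappa^{-1})$. The principal technical obstacle I anticipate is the bookkeeping in the coupled ODE for the coexact branch (the interaction of $f$ and $g$ through the warping function) and pinning down precisely why the shift is $+p$ and not $+(n-1-p)$; this is where the hypothesis $p \leq \frac{n-1}{2}$ should genuinely be used, and getting the sharp constant in the comparison argument will require a careful choice of radial profile for the model problem.
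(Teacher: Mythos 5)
Your overall architecture matches the paper's: separate variables to reduce $\karsgp_{(m)}$ to a radial Sturm--Liouville problem on $[0,R]$ indexed by the coclosed eigenvalues $\lmpmm=(m+p)(n+m-p-2)$ of $\Snm$, translate $\ric_g\geq 0$ and convexity into conditions on $h$, run a comparison for the boundary logarithmic derivative, and extract rigidity from saturation. Your worry about a coupled system for $f,g$ is also resolved in the paper exactly as you'd hope: the decoupled ansatz $\psi(r)\om(x)$ already spans all harmonic coclosed extensions orthogonal to $\mathcal{H}_D\Om^p(M)$, so no $dr\wedge\eta$ terms are needed. However, two steps of your outline are off in ways that matter. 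The key comparison is \emph{not} a Sturm comparison against the Euclidean ball on $[0,\kappa^{-1}]$: that would pit two different ODEs on two different intervals against each other, with no standard theorem to invoke. What the paper (following Xiong) actually proves is the inequality $\psi'(R)/\psi(R)\geq (m+p)\,h'(R)/h(R)$ \emph{within the same manifold}, i.e.\ a Riccati-type comparison of $\psi$ against the explicit supersolution $h^{m+p}$ of the same radial ODE, carried out by showing $q:=h\psi'-(m+p)h'\psi\geq 0$ via a first-order differential inequality; both consequences $h''\leq 0$ and $0<h'\leq 1$ of the hypotheses enter there, and only at the very end does $h'(R)/h(R)=\kappa$ convert this into $(m+p)\kappa$. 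As written, your comparison step is the heart of the proof and is not yet an argument.

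Second, you place the hypothesis $p\leq\frac{n-1}{2}$ in the wrong spot. There is no competition between a $+p$ and a $+(n-1-p)$ branch of boundary eigenforms: $\Lm^{(p)}$ vanishes on exact forms, so for every $p$ only the coexact family on $\Snm$, with eigenvalues $\lmpmm=(m+p)(n+m-p-2)$, contributes. The restriction $p\leq\frac{n-1}{2}$ is used purely in the radial analysis: it makes the coefficient $n-2p-1$ of $\frac{h'}{h}\psi'$ nonnegative, which is what yields $\psi,\psi'>0$, $\psi'(0)=0$ (the weight $h^{n-2p-1}$ would degenerate at $r=0$ otherwise), and the nonnegativity of the bracket in the differential inequality controlling the auxiliary function $u$. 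Your rigidity sketch is essentially correct, except that the fiber is $\Snm$ by the definition of the warped products considered, so no transverse saturation argument is needed; the paper instead derives $(m+p)hh''+\lmpmm((h')^2-1)=0$ from $q\equiv 0$ and concludes $h'\equiv 1$, hence $h(r)=r$.
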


\begin{remark}
Analogous to~\cite[Theorem~4]{Xio22}, we may instead use the hypothesis of $\ric_g(M)\leq 0$ and strictly convex boundary to get a sharp upper bound for $\karsgp_{(m)}(M)$ given by the same expression on the RHS of inequality~\eqref{Xiong-type lower bound for p-forms}, provided that $\pam$ is connected as this is not implied by the hypothesis, unlike in Theorem~\ref{thm: Xiong-type lower bound for p-forms}.
\end{remark}
\noi
Xiong~\cite{Xio24} extended the above mentioned weaker version of the Escobar conjecture for functions due to Xia and Xiong~\cite{XX23} to the setting of $p$\nobreakdash-forms, that for each $p\in \{0,1,\dots,n-2\}$, a manifold with strictly convex boundary, $\sect_g\geq0$ and the Weitzenb\"ock curvature $\wz\pp_g\geq0$, has the following lower bound for its first nonzero Steklov eigenvalue.
$$\karsgp_1\geq (1+p)\kappa,$$
with the equality holding for the Euclidean ball of radius $\kappa^{-1}$. Here, $\kappa\in \RR_{>0}$ denotes the infimum of the principal curvature of $\pam$. Note that, when restricted to the class of warped product manifolds, our sharp lower bound in Theorem~\ref{thm: Xiong-type lower bound for p-forms} coincides with this, under a weaker hypothesis of nonnegative Ricci curvature. In addition, we also have rigidity for the equality case in Theorem~\ref{thm: Xiong-type lower bound for p-forms}, which is not known for the result in~\cite{Xio24}. Further, it would be interesting to see if we can replace our hypothesis on the Ricci curvature and strictly convex boundary with some assumptions on the Weitzenb\"ock curvature and the $p$\nobreakdash-curvatures (see, e.g.,~\cite{RS12}) to get meaningful bounds, preferably with the restrictions on $p$ in Theorem~\ref{thm: Xiong-type lower bound for p-forms} removed.
\par
With similar hypotheses on Ricci curvature and convexity of the boundary, Xiong~\cite{Xio21} also proved optimal upper and lower bounds for the eigenvalue gaps and the ratios of successive eigenvalues for functions. In a more recent work, Brisson, Colbois and Gittins~\cite[Theorems~1.1, 1.2, 1.5~\&~1.7]{BCG25} considered warped product manifolds with connected boundary with no further hypothesis on the curvature or the convexity of the boundary and obtained upper bounds for the eigenvalue gaps and the ratios in the case of functions, in terms of the gaps and ratios for the corresponding Laplace eigenvalues of the Euclidean unit sphere $\Snm$. We show that analogous results hold for $p$\nobreakdash-forms in general.
\begin{thm}\label{thm: bcg ratio upperbound for p-forms}
Let $(M^n,g)$ be a topological ball with revolution-type metric, i.e., a warped product manifold as in Section~\ref{subsec: warped prod mfds}, with connected boundary. Then,
\begin{enumerate}[(i)]
\item For $n\geq3$ and $p\in\{0,\dots,n-2\}$,
\eqn{
\frac{\karsgpkkp(M)}{\karsgpkk(M)}<\frac{\lmp_{(k+1)}(\Snm)}{\lmp_{(k)}(\Snm)}=\frac{(k+p+1)(n+k-p-1)}{(k+p)(n+k-p-2)},\qquad \forall \, k\in \NN,
}
and, if $p\leq \frac{n-3}{2}$, this upper bound is sharp. Here, $\{\lmp_{(k)}\}_{k\in \NN}$ denote the positive eigenvalues of the Hodge Laplacian acting on the space of coclosed forms $\text{cc}\Om^p(\Snm)$, counted without multiplicity.\vspace{1em}
\item When $n\geq2$ is even and $p=\frac{n-2}{2}$,
\eqn{
\karsgpkk(M)=\frac{\karsgpkk(\bn)}{h_0}=\frac{k+p}{h_0},
}
where $h_0\in\RR_{>0}$ is the value of the associated warping function on $\pam$. Here, $\bn$ denotes the Euclidean unit ball.
\end{enumerate}
\end{thm}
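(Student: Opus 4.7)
The plan is to reduce the eigenvalue problem on $M=[0,R]\times_h \Snm$ to a family of radial ODEs indexed by the spectrum of the Hodge Laplacian on coclosed $p$-forms of $\Snm$. Exploiting the warped product structure of Section~\ref{subsec: warped prod mfds}, every coclosed eigenform on $\pam=\Snm$ associated with eigenvalue $\lmpkk(\Snm)$ extends uniquely to a form on $M$ lying in Karpukhin's harmonic space, and its tangential and normal boundary data identify $\karsgpkk(M)$ as a Steklov eigenvalue of $\Lambda\p$. Each such extension solves an explicit second order ODE in $r$ whose coefficients depend on $h$, $p$, $n$ and $\lmpkk$; taking the solution regular at $r=0$ and evaluating a logarithmic-derivative-type ratio at $r=R$ yields an expression $\karsgpkk(M)=F_h(\lmpkk(\Snm))$ with $F_h$ an explicit increasing function.

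For part~(i), after writing $F_h$ explicitly, I would establish the strict sub-linearity
\eqn{
\frac{F_h(\lambda_2)}{F_h(\lambda_1)}<\frac{\lambda_2}{\lambda_1},\qquad \lambda_2>\lambda_1>0,
}
either by a min-max argument using boundary eigenforms as test forms for $\karsgpkk$, in the spirit of~\cite{BCG25}, or by a direct monotonicity/concavity analysis of $F_h$ from the ODE. The known Hodge spectrum $\lmpkk(\Snm)=(k+p)(n+k-p-2)$ on coclosed $p$-forms of $\Snm$ then converts this into the stated inequality, while the arithmetic identity for the Hodge ratio is elementary. Sharpness when $p\leq \frac{n-3}{2}$ would be obtained via a family of warped product metrics that degenerate appropriately (for instance, with a long cylindrical neck followed by a small spherical cap), so that the Steklov ratios on $M$ converge to the Hodge Laplacian ratios on $\Snm$.

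For part~(ii), at the critical value $p=\frac{n-2}{2}$ one has $n-1-p=p+1$, so the tangential and normal contributions to the Hodge Laplacian on $M$ become symmetric. I expect the first-order $h'/h$ terms in the radial ODE to cancel in this dimension, decoupling the equation from the interior profile of $h$ and leaving the logarithmic derivative at $r=R$ dependent only on $h_0=h(R)$ and $\lmpkk(\Snm)$. A direct integration should then give $\karsgpkk(M)=(k+p)/h_0$, which is precisely the scaling $\karsgpkk(\bn)/h_0$ of the known Euclidean unit ball spectrum.

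The hardest step will be establishing the strict sub-linearity of $F_h$ uniformly in the warping function for every $p\in\{0,\dots,n-2\}$, and in particular tracking the qualitative change in the ODE above and below the critical index $p=\frac{n-2}{2}$; once secured, the arithmetic on $\Snm$ closes part~(i) and the sharpness construction reduces to a perturbative argument. Part~(ii) is then a finite algebraic cancellation at the critical index, after which the isospectrality with $\bn$ is automatic.
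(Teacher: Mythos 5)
Your reduction of part~(i) to the one-dimensional functional
$F_h(\lambda)=\min_{\psi}\big(\int_0^R (\psi')^2h^{n-2p-1}+\lambda\psi^2h^{n-2p-3}\,dr\big)\big/\big(\psi(0)^2h_0^{n-2p-1}\big)$
and the strict sub-linearity $F_h(\lambda_2)/F_h(\lambda_1)<\lambda_2/\lambda_1$ obtained by recycling the minimiser for $\lambda_1$ as a test function for $\lambda_2$ is exactly the paper's argument (strictness because the gradient term cannot vanish, by the ODE), so the main inequality is fine. The sharpness construction you sketch, however, would fail: on a long cylindrical neck ($h\equiv1$) the radial equation is $\psi''=\lambda\psi$, the Steklov eigenvalue of the mode $\lambda$ tends to $\sqrt{\lambda}$, and the ratio tends to $\sqrt{\lmp_{(k+1)}/\lmp_{(k)}}$, which is strictly below the claimed optimal constant. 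The correct degeneration is the opposite one: take $h_\eps\equiv1$ near the boundary and blowing up like $\eps^{-1/(2(n-2p-3))}$ on most of $[0,R]$, so that the zeroth-order term $\lambda\int\psi^2h^{n-2p-3}$ dominates the Rayleigh quotient, both eigenvalues diverge, and their ratio converges to $\lmp_{(k+1)}/\lmp_{(k)}$. This is precisely where the hypothesis $p\le\frac{n-3}{2}$ (i.e.\ $n-2p-3\ge0$) enters, a restriction your plan does not account for.

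In part~(ii) your proposed mechanism is wrong: at $p=\frac{n-2}{2}$ the first-order coefficient is $n-2p-1=1$, not $0$, so the $h'/h$ term does not cancel and the ODE still sees the interior profile of $h$. The profile-independence is nevertheless true, but for a different reason: substituting $ds=dr/h$ turns the radial functional into $\int(\dot\psi^2+\lambda\psi^2)\,ds$ over a half-line (the integral $\int^R dr/h$ diverges at the pole since $h$ vanishes to first order there), which depends on $h$ only through $h_0$ in the denominator and yields $\sqrt{\lmp_{(k)}}/h_0=(k+p)/h_0$. Equivalently --- and this is the paper's route --- the energy $\int_M|d\om|^2$ is conformally invariant exactly in degree $p=\frac{n}{2}-1$, so $M$ is Steklov-isospectral to its conformal representative, the ball of radius $h_0$. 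Finally, note that writing $\karsgpkk(M)=F_h(\lmp_{(k)}(\Snm))$ presupposes that $F_h$ is increasing, so that the $k$-th Steklov eigenvalue counted without multiplicity really pairs with the $k$-th coclosed Hodge eigenvalue of $\Snm$; you assert this but it requires the monotonicity argument carried out in the paper's separation-of-variables proposition.
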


\begin{thm}\label{thm: bcg spectral gaps upperbound for p-forms}
Let $(M^n,g)$ be a topological ball with revolution-type metric. Then,
\begin{enumerate}[(i)]
    \item For $n\geq 4$ and $p\leq \frac{n-4}{2}$, if the associated warping function satisfies $h(r)\leq C$ on $r\in [0,R]$ for a constant $C>0$,
    \eqn{
    \karsgpkkp(M)-\karsgpkk(M)\leq \frac{RC^{n-2p-3}}{h_0^{n-2p-1}}\big(\lmp_{(k+1)}(\Snm)-\lmp_{(k)}(\Snm)\big).
    }
    \item For $n\geq 3$ odd and $p=\frac{n-3}{2}$,
    \eqn{
    \karsgpkkp(M)-\karsgpkk(M)\leq \frac{R}{h_0^{n-2p-1}}\big(\lmp_{(k+1)}(\Snm)-\lmp_{(k)}(\Snm)\big).
    }
\end{enumerate}
Further, it follows that a uniform lower bound on $h_0$ would give uniform upper bounds for the spectral gaps above, independent of the metric/warping function. As before, $h_0$ here denotes the value of the warping function on $\pam$.
\end{thm}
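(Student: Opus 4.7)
The plan is to exploit the warped product structure of $M=[0,R]\times_h \Snm$ to reduce the Karpukhin Steklov eigenvalue problem on $p$-forms to a family of one-dimensional variational problems indexed by the Hodge--Laplace eigenvalues of $\Snm$ on coclosed $p$-forms, in the spirit of the reduction already used for Theorem~\ref{thm: bcg ratio upperbound for p-forms}. The first step is to establish a Rayleigh-quotient characterisation of the form
\[
\karsgpkk(M)\,h_0^{\,n-2p-1}\;=\;\inf_{\phi}\;\frac{\int_0^{R}\bigl[\phi'(r)^{2}\,h(r)^{n-2p-1}+\lmpkk(\Snm)\,\phi(r)^{2}\,h(r)^{n-2p-3}\bigr]\,dr}{\phi(R)^{2}},
\]
the infimum being taken over smooth radial profiles $\phi$ satisfying an appropriate orthogonality condition isolating the $k$-th sector. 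This arises from the ansatz $\phi(r)\,\xi$, with $\xi$ a coclosed Hodge--Laplace eigenform on $\Snm$, possibly paired with a $dr\wedge\eta$ component that can be eliminated algebraically within the range of $p$ under consideration; the weights $h^{n-2p-1}$ and $h^{n-2p-3}$ come from the warped-product volume form together with the $p$ inverse-metric contractions in the pointwise inner product of $p$-forms.

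The second step is a standard test-function argument. Let $\phi_k$ be a minimiser of the above Rayleigh quotient corresponding to $\karsgpkk(M)$, normalised so that $\phi_k(R)=1$. Substituting $\phi_k$ into the Rayleigh quotient formed with $\lmpkkp(\Snm)$ in place of $\lmpkk(\Snm)$, and subtracting, yields
\[
\karsgpkkp(M)-\karsgpkk(M)\;\leq\;\frac{\lmpkkp(\Snm)-\lmpkk(\Snm)}{h_0^{\,n-2p-1}}\int_0^{R}\phi_k(r)^{2}\,h(r)^{n-2p-3}\,dr,
\]
once one verifies the monotone pairing of sphere-eigenvalue index with Steklov-eigenvalue index that makes this substitution legitimate---the same pairing that is built into the proof of Theorem~\ref{thm: bcg ratio upperbound for p-forms}.

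It then remains to control $\int_0^{R}\phi_k^{2}\,h^{n-2p-3}\,dr$. In case (i), with $p\leq\frac{n-4}{2}$, the exponent $n-2p-3\geq 1$, so $h^{n-2p-3}\leq C^{n-2p-3}$ by hypothesis, and a maximum-principle or monotone-rearrangement argument applied to the associated Sturm--Liouville equation (as in the scalar case of Brisson--Colbois--Gittins) shows that the minimiser $\phi_k$ may be normalised so that $\|\phi_k\|_{\infty}\leq\phi_k(R)=1$; this produces the factor $R\,C^{n-2p-3}$ in the final estimate. In case (ii), with $p=\frac{n-3}{2}$, the exponent $n-2p-3$ vanishes, the weight is identically $1$, no hypothesis on $h$ is needed, and the factor is simply $R$. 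The main obstacle is the first step: rigorously deriving the scalar Rayleigh-quotient form of $\karsgpkk(M)$ with exactly the advertised weights requires unwinding Karpukhin's DtN operator on coclosed $p$-forms in the warped product setting, tracking how the Hodge decomposition on $\Snm$ lifts to harmonic extensions on $M$, and confirming that in the restricted range of $p$ the paired $dr\wedge$-component can indeed be eliminated without distorting the boundary normalisation. Once this reduction is in place, the test-function argument and the $L^{\infty}$-normalisation of the minimiser are comparatively routine.
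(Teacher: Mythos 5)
Your proposal is correct and follows the same overall strategy as the paper — reduce to the one-dimensional Rayleigh quotient with weights $h^{n-2p-1}$ and $h^{n-2p-3}$, plug the minimising profile for $\karsgpkk(M)$ into the quotient with $\lmpkkp(\Snm)$ in place of $\lmpkk(\Snm)$, and subtract — but it finishes differently. You control the resulting term $\int_0^R\phi_k^2h^{n-2p-3}\,\dr$ directly, via the sup bound $h\leq C$ together with an $L^\infty$ bound $\|\phi_k\|_\infty\leq\phi_k(\text{boundary})=1$ on the minimiser; that monotonicity is indeed available (it is proved in the paper's Section~3 by integrating the ODE to get $h^{n-2p-1}\psi'=\lmpmm\int_0^rh^{n-2p-3}\psi\,\dt$, which forces $\psi'>0$ for the range of $p$ at hand, so the profile peaks at the boundary), though your appeal to "a maximum-principle or monotone-rearrangement argument" should be replaced by that explicit computation. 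The paper instead factors the error term as $\frac{\lmpkkp-\lmpkk}{\lmpkk}\cdot\frac{\int_0^R\lmpkk\phi_k^2h^{n-2p-3}\,\dr}{h_0^{n-2p-1}}\leq\frac{\lmpkkp-\lmpkk}{\lmpkk}\,\karsgpkk$, and then separately proves $\karsgpkk\leq\frac{C^{n-2p-3}}{h_0^{n-2p-1}}(R\lmpkk+\tfrac43\eps)$ by testing with a profile that is identically $1$ away from an $\eps$-neighbourhood of the tip, letting $\eps\to0$ at the end. Your route avoids the auxiliary test function and the limiting argument but requires the monotonicity of the eigenprofile; the paper's route needs only the existence of the minimiser and yields, as a by-product, an explicit upper bound on $\karsgpkk$ itself. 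Two minor imprecisions to fix: the scalar characterisation involves no "orthogonality condition isolating the $k$-th sector" — the sector is selected solely by which $\lmpkk$ appears in the numerator, with the profile constrained only to vanish at the tip — and the legitimacy of pairing the $k$-th distinct Steklov eigenvalue with the $k$-th distinct coclosed sphere eigenvalue is exactly the content of the separation-of-variables proposition, which must be invoked, not just the analogy with the scalar case.
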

\par
We now consider hypersurfaces of revolution, described in Section~\ref{subsec: warped prod mfds}. The warping function associated with this class of warped product manifolds satisfies certain additional conditions (see, e.g., Section~\ref{sec: proofs of cgg thms for p-forms}) that can be utilised in the study of eigenvalues. Colbois, Girouard and Gittins~\cite{CGG19} proved various upper and lower bounds for the Steklov eigenvalues for functions, among the class of hypersurfaces of revolution. We present certain generalisations to the setting of $p$\nobreakdash-forms, with contrasting behaviour for different values of $p$.
\parnoi
The following theorem considers the scenario of a connected boundary $\Snm$ and states that the Steklov eigenvalues are bounded from below by those of the Euclidean unit ball, for $p$ small enough. More precisely,
\begin{thm}\label{thm: thm 1.8 of cgg lowerbound for one bdry compt for p-forms}
(cf.~\cite[Theorem~1.8]{CGG19}) Let $M^n\subset\RR^{n+1}$ be a hypersurface of revolution with boundary $\Snm\times\{0\}$.
\begin{enumerate}[(i)]
    \item For $n\geq 3$ and $p\leq \frac{n-3}{2}$,
    \eqn{\karsgpk(M)\geq\karsgpk(\bn),\qquad \forall\, k\in\NN,}
    with the equality holding if and only if $M$ is the Euclidean unit ball $\bn\times\{0\}$. \vspace{1em}
    \item For $n\geq2$ even and $p=\frac{n-2}{2}$, $M$ is isospectral to the Euclidean unit ball:
    \eqn{\karsgpk(M)=\karsgpk(\bn),\qquad \forall\, k\in\NN.}
\end{enumerate}
\end{thm}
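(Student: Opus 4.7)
The plan is to use the separation of variables for warped products set up in Section~\ref{sec: prelim} to reduce the Steklov problem on coclosed $p$\nobreakdash-forms to a family of one-dimensional Sturm-Liouville problems, and then to perform a Riccati-type comparison exploiting the arclength constraint $|h'(r)| \leq 1$ that characterises hypersurfaces of revolution. For each eigenvalue $\lmpk$ of the Hodge Laplacian on $\text{cc}\Om^p(\Snm)$, the corresponding Steklov eigenvalue of $M = [0,R] \times_h \Snm$ equals $f'(R)/f(R)$, where $f$ solves
\[
-\big(h^{n-2p-1} f'\big)' + \lmpk\,h^{n-2p-3}\,f = 0 \qquad \text{on } (0,R),
\]
regular at the pole ($f(r) \sim r^{k+p}$ as $r \to 0^+$). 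For $\bn$ one has $h(r) = r$ on $[0,1]$, and the Euler-type ODE yields $f(r) = r^{k+p}$ and $\karsgpk(\bn) = k+p$.

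Setting $v \df f'/f$ converts the ODE into the Riccati equation $v' = -(n-2p-1)(h'/h)\,v + \lmpk/h^2 - v^2$, which for the ball gives $v_\BB(s) = (k+p)/s$. Pulling back to the unit interval via $s = h(r)$ and writing $w(s) \df v_M(r(s))$, a direct computation shows that $\psi \df w - v_\BB$ satisfies the linear first-order ODE
\[
\psi'(s) + \Big[\tfrac{n-2p-1}{s} + \tfrac{w + v_\BB}{h'(r(s))}\Big]\psi(s) = \Big(1 - \tfrac{1}{h'(r(s))}\Big)\Big(v_\BB^2 - \tfrac{\lmpk}{s^2}\Big).
\]
The algebraic identity $(k+p)^2 - \lmpk = (k+p)(2p+2-n)$ rewrites the right-hand side as $(1 - 1/h')(k+p)(2p+2-n)/s^2$. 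Since $h' \leq 1$, the first factor is non-positive; the second factor is non-positive precisely when $p \leq \frac{n-2}{2}$. Therefore the right-hand side is non-negative throughout the regime of the theorem.

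For part~(i), with $p \leq \frac{n-3}{2}$ one has $2p+2-n < 0$, so the right-hand side is non-negative and strictly positive wherever $h' < 1$. A local analysis of the regular solution at the pole gives $\psi(s) = O(s)$ as $s \to 0^+$; multiplying the linear ODE by a suitable integrating factor then yields $\psi \geq 0$ on $(0,1]$, hence $\karsgpk(M) = w(1) \geq v_\BB(1) = \karsgpk(\bn)$. Equality at $s = 1$ forces the right-hand side to vanish identically, so $h' \equiv 1$; combined with $h(0) = 0$ and $h(R) = 1$ this gives $h(r) = r$ and $R = 1$, identifying $M$ with $\bn \times \{0\}$. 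For part~(ii), the critical value $p = \frac{n-2}{2}$ makes $2p+2-n = 0$, so the right-hand side vanishes identically; the resulting homogeneous equation with $\psi(0^+) = 0$ forces $\psi \equiv 0$, and therefore $\karsgpk(M) = \karsgpk(\bn) = k+p$ for every $k$.

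The main obstacle is the local analysis at the collapsing pole, where the radial ODE is singular: one must establish the asymptotic expansion $v_M(r) = (k+p)/r + O(r)$ for the regular solution, track how this transforms under $s = h(r)$ (using $h(r) = r + O(r^3)$ for a smooth hypersurface of revolution) to secure $\psi(0^+) = 0$, and justify the integrating-factor argument on an interval with a first-order singular coefficient at $s = 0$. Once the pole asymptotics are in place, the remainder of the proof is an essentially algebraic Riccati comparison.
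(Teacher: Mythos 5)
Your reduction to a one-dimensional problem per spherical mode is consistent with Proposition~\ref{prop: sepn of variables for one bdry}, and the Riccati identity you derive is algebraically correct where it makes sense; the pole asymptotics you flag as the main obstacle are indeed manageable. The genuine gap is elsewhere: both the change of variables $s=h(r)$ and the sign analysis of the factor $1-\tfrac{1}{h'}$ require the warping function to be strictly monotone with $0<h'\leq 1$ along the whole meridian. A hypersurface of revolution only satisfies $|h'|\leq 1$ (from the arclength condition $(h')^2+(z')^2=1$); the profile may bulge, so $h$ need not be monotone, $h'$ may vanish (making $1/h'$ undefined and degenerating the transformed equation), and on any stretch where $h'<0$ the factor $1-\tfrac{1}{h'}$ becomes positive, reversing the sign of your right-hand side and destroying the comparison $\psi\geq 0$. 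The same defect affects part~(ii) and the equality analysis, since the homogeneous equation you invoke still carries the coefficient $(w+v_\BB)/h'$ and the substitution itself is not a diffeomorphism. As written, the argument covers only monotone profiles, not the general statement.

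The paper's proof avoids this entirely. It uses only the pointwise inequality $h(r)\geq 1-r$ on the collar adjacent to the boundary (valid regardless of monotonicity, as it follows from $|h'|\leq 1$ and the boundary value $1$), observes that for $p\leq\frac{n-3}{2}$ both exponents $n-2p-1$ and $n-2p-3$ are nonnegative so the Rayleigh-quotient integrand is monotone in $h$, and then feeds the restrictions of the eigenforms of $M$ to $[0,1]\times\Snm$ into the min-max characterisation for $\bn$ (checking coclosedness via Remark~\ref{rmk: coclosedness for test forms} and membership in $H^1$). Part~(ii) is obtained from the conformal invariance of the Rayleigh quotient in the critical degree $p=\frac{n-2}{2}$, as in Theorem~\ref{thm: bcg ratio upperbound for p-forms}(ii), again with no monotonicity needed. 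If you wish to retain an ODE comparison, you must either work directly in the $r$ variable without inverting $h$, or pass to the variational formulation, because the only pointwise information available is $|h'|\leq 1$.
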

When there are two boundary components each of which is isometric to $\Snm$, we prove, for sufficiently large meridian length, sharp lower bounds for the Steklov eigenvalues for $p$\nobreakdash-forms in terms of the eigenvalues of the Euclidean ball. For small meridian lengths, we have non-sharp lower bounds that depend on the meridian length, when $p$ is small enough. In addition to these results which generalise the previously known lower bounds in the case of functions, we further have, for small meridian lengths, upper bounds when $p$ is large enough. 
\begin{thm}\label{thm: thm 1.11 of cgg lower bound for two bdry compts for p-forms}
(cf.~\cite[Theorem~1.11]{CGG19}) Let $M^n\subset \RR^{n+1}$ be a hypersurface of revolution ($n\geq3$) with boundary $\Snm\times\{0,d\}\subset \RR^{n+1}$, $L$ be the intrinsic distance between the boundary components (meridian length).
\begin{enumerate}[(i)]
    \item If $L\geq2$, then for each $p\leq \frac{n-3}{2},$
    \eqn{
    \karsgpk(M)\geq \karsgpk(\bn\sqcup\bn),\qquad \forall \, k\in\NN,
    } and the inequality is sharp. \vspace{1em}
    \item When $0<L\leq 2$:\vspace{.5em}
    \begin{enumerate}[(a)]
        \item If $p\leq \frac{n-3}{2}$, then
        \eqn{
        \karsgpk(M)\geq\Big(1-\frac{L}{2}\Big)^{n-2p-1}\karsgpk(C_L),\qquad \forall \,k\in\NN.
        }
        \item If $p\geq\frac{n-1}{2}$, then
        \eqn{
        \karsgpk(M)\leq \Big(1-\frac{L}{2}\Big)^{n-2p-1}\karsgpk(C_L) ,\qquad \forall\,k\in\NN. 
        }
    \end{enumerate}Here, $C_L$ denotes the standard cylinder $\Snm\times [0,L]$.
\end{enumerate}
\end{thm}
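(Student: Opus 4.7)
The plan is to adapt the strategy of~\cite[Theorem~1.11]{CGG19} to the $p$-form setting, using the variational/ODE reduction for Karpukhin's DtN on warped products developed in Section~\ref{sec: prelim}. On a warped product $[0,L]\times_h \Snm$ with two isometric $\Snm$ boundary components, coclosed boundary forms decompose into eigenforms of the Hodge Laplacian on $\Snm$, and within each eigenspace the Steklov problem reduces to a one-dimensional mixed problem whose Rayleigh quotient carries the weight $h^{n-2p-1}$ on the kinetic numerator term, a lower-order contribution proportional to $\lmpk\, h^{n-2p-3}$, and a boundary quadratic form anchored at $r=0$ and $r=L$. The same decomposition applies verbatim to $\bn\sqcup\bn$ and to the cylinder $C_L$, so that comparisons of $\karsgpk$ reduce, eigenspace by eigenspace, to pointwise comparisons of the radial weight $h^{n-2p-1}$.

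For part~(i), since $M\subset\RR^{n+1}$ is a hypersurface of revolution parametrised by arclength along its meridian, the warping function satisfies $|h'|\leq 1$ with $h(0)=h(L)=1$. When $L\geq 2$ these force $h(r)\geq 1-r$ on $[0,1]$ and $h(r)\geq 1-(L-r)$ on $[L-1,L]$, which are precisely the Euclidean ball profiles (in the boundary-to-pole parameters $s=1-r$ and $s=r-L+1$, respectively) near each boundary. Since $p\leq(n-3)/2$ makes the exponent $n-2p-1$ strictly positive, a max-min argument using test subspaces built from ball eigenfunctions on each cap (and exploiting that the interior portion $[1,L-1]$ can only contribute non-negatively to the numerator) then yields $\karsgpk(M)\geq\karsgpk(\bn\sqcup\bn)$. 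Sharpness is witnessed in the limit $L\to 2$ with the two-ball profile $h(r)=\min(r,L-r)$.

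For part~(ii) with $0<L\leq 2$, the same hypersurface constraints $|h'|\leq 1$ and $h(0)=h(L)=1$ give the uniform pointwise bound $h(r)\geq 1-L/2$ on $[0,L]$. In case~(a) with $p\leq(n-3)/2$, the positive exponent $n-2p-1$ gives $h^{n-2p-1}\geq(1-L/2)^{n-2p-1}$ pointwise, and an analogous bound for the $h^{n-2p-3}$ term; an eigenspace-by-eigenspace comparison with the cylinder Rayleigh quotient (where $h\equiv 1$) then yields the stated lower bound. In case~(b) with $p\geq(n-1)/2$, the exponent $n-2p-1$ is non-positive, and the very same pointwise bound $h\geq 1-L/2$ now gives $h^{n-2p-1}\leq(1-L/2)^{n-2p-1}$, reversing the direction of the Rayleigh-quotient comparison and producing an upper bound rather than a lower bound; the transition at $p=(n-2)/2$ (for even $n$) is excluded from both subcases, consistently with the isospectrality already observed in Theorem~\ref{thm: bcg ratio upperbound for p-forms}~(ii).

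The main technical obstacle is pinning down the correct 1D Rayleigh quotient for Karpukhin's DtN on coclosed $p$-forms in the two-boundary-component setting: one has to track both the pure-tangential and the mixed radial/tangential sectors of the Hodge decomposition of harmonic $p$-forms on $M$, and treat the coupling of boundary data on the two copies of $\Snm$. Once this is in place the monotonicity comparison in $h$ is essentially routine, but some care is still required to verify that the family-by-family bounds do control the full $k$-th eigenvalue $\karsgpk$ counted with multiplicity, and that in part~(i) the test subspaces built from ball eigenfunctions admit the necessary orthogonality after restriction to the two caps.
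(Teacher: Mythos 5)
Your overall strategy is the same as the paper's: separate variables into $\Snm$-eigenspaces, reduce to one-dimensional Rayleigh quotients weighted by $h^{n-2p-1}$ and $h^{n-2p-3}$, and compare $h$ pointwise with the profiles of $\bn\sqcup\bn$ (namely $1-r$ and $r-L+1$ on the two caps) and of $C_L$ (via $1-\frac{L}{2}\le h\le 1+\frac{L}{2}$). The only structural difference is in part~(i): the paper restricts the span of the first $k$ eigenforms of $M$ to the two caps and uses them as test forms in the min--max for $\bn\sqcup\bn$, whereas you propose the dual max--min with a constraint subspace built from ball eigenforms; that route does work provided the orthogonality constraint is imposed on boundary traces (so that restriction to the caps preserves it), but it buys nothing extra here. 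Your family-by-family organisation is also fine, since element-wise domination of each two-element family implies domination of the ordered union of the families.

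Two concrete problems. First, your sharpness witness in (i) is wrong: $h(r)=\min(r,L-r)$ violates $h(0)=h(L)=1$ and describes a closed bead rather than two balls; the correct degenerating profile is a smoothing of $h(r)=\max(1-r,\,r-L+1)$ with $L\to2$, i.e.\ two unit balls joined along a collapsing neck. Second, and more seriously, in (ii)(b) you only reverse the inequality for the $h^{n-2p-1}$ weight. The zeroth-order term carries the weight $h^{n-2p-3}$, and for $p\ge\frac{n-1}{2}$ the bound $h\ge1-\frac{L}{2}$ only gives $h^{n-2p-3}\le\big(1-\frac{L}{2}\big)^{n-2p-3}$, which is \emph{larger} than the constant $\big(1-\frac{L}{2}\big)^{n-2p-1}$ you need, their ratio being $\big(1-\frac{L}{2}\big)^{-2}>1$. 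Already for $n=3$, $p=1$ the claimed comparison $\ray_M(\phi)\le\ray_{C_L}(\phi)$ would force $\int\alpha^2\lmp_j h^{-2}\le\int\alpha^2\lmp_j$, i.e.\ essentially $h\ge1$ on $[0,L]$, which fails for any hypersurface with a neck. So the term-wise reversal does not deliver the stated constant in (ii)(b); this step needs either a different argument or a weaker constant such as $\big(1-\frac{L}{2}\big)^{n-2p-3}$. (The paper's own proof of (ii)(b) asserts the same reversal without addressing this term, so the point deserves attention there as well.)
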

\noi The proof of Theorem~\ref{thm: prop3.3 of cgg misc bounds for two bdry compts for p-forms} in Section~\ref{sec: proofs of cgg thms for p-forms} also provides the (unordered) set of eigenvalues $\{\karsgpk(C_L)\}_{k\in \NN}$ in terms of the coclosed Hodge Laplace eigenvalues of $\Snm$ and the meridian length. 
\par We also have the following non-sharp upper bounds for the Steklov eigenvalues for $p$\nobreakdash-forms in terms of the eigenvalues of standard cylinders and the meridian length, for $p$ small enough. We further have new types of lower bounds when $p$ is large enough, which are not applicable for the case of functions. 
\begin{thm}\label{thm: prop3.3 of cgg misc bounds for two bdry compts for p-forms}
(cf.~\cite[Proposition~3.3]{CGG19}) Let $M^n$ be as in Theorem~\ref{thm: thm 1.11 of cgg lower bound for two bdry compts for p-forms}. For any $L>0$,
\begin{enumerate}[(i)]
    \item If $p\leq\frac{n-3}{2}$, then
    \eqn{
    \karsgpk(M)\leq \Big(1+\frac{L}{2}\Big)^{n-2p-1}\karsgpk(C_L),\qquad \forall\,k\in\NN.
    }
    \item If $p\geq\frac{n-1}{2}$, then
    \eqn{
    \karsgpk(M)\geq \Big(1+\frac{L}{2}\Big)^{n-2p-1}\karsgpk(C_L), \qquad \forall\,k\in\NN.
    }
\end{enumerate}
The RHS goes to $0$ as $L\to0$ in each case.
\end{thm}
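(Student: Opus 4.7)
The plan is to follow the blueprint of \cite[Proposition~3.3]{CGG19}, replacing the scalar Rayleigh quotient by the one associated with Karpukhin's DtN operator on coclosed $p$\nobreakdash-forms and tracking a single power of the warping function through a min-max transplantation between $M$ and the cylinder $C_L$. The key geometric input is the bound $h(r)\leq 1+L/2$ on $[0,L]$: writing the meridian curve of $M$ as $(z(r),\rho(r))$ in the $(z,\rho)$-half-plane with arc-length parameter, $\dot z^2+\dot\rho^2=1$ forces $|h'|=|\dot\rho|\leq 1$; since $h(0)=h(L)=1$, this gives $h(r)\leq 1+\min(r,L-r)\leq 1+L/2$. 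Since $M$ and $C_L$ coincide as smooth manifolds $[0,L]\times\Snm$ with identical boundary metrics (both boundary components are unit spheres, as $h(0)=h(L)=1$), the coordinate pullback $\Phi^{*}$ provides a canonical identification of admissible coclosed $p$\nobreakdash-forms on the two.

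Decomposing these forms spectrally along the Hodge Laplace eigenmodes on $\Snm$, as in the proof of Theorem~\ref{thm: thm 1.11 of cgg lower bound for two bdry compts for p-forms}, reduces Karpukhin's eigenvalue problem on each mode to a 1D Sturm-Liouville problem whose Rayleigh quotient integrates powers of $h$ in the interior while preserving the boundary norm. A careful bookkeeping of these powers -- the main technical point, since $|d\omega|^2$, $|\delta\omega|^2$, and $|\omega|^2$ each carry distinct powers of $h$ coming from the metric on $p$\nobreakdash-forms and the warped volume element -- shows that, after grouping contributions according to the Hodge decomposition of the spherical eigenform, the ratio between the Karpukhin Rayleigh quotients on $M$ and on $C_L$ (for a test form and its image under $\Phi^{*}$) collapses to a single factor $h(r)^{n-2p-1}$. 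Combined with $h\leq 1+L/2$, this yields: for every admissible test form $\omega$, the Rayleigh quotient on $M$ is at most $(1+L/2)^{n-2p-1}$ times that on $C_L$ when $n-2p-1\geq 0$, with the inequality reversed when $n-2p-1\leq 0$.

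In case (i), with $p\leq (n-3)/2$ and hence $n-2p-1\geq 2$, one takes the $k$\nobreakdash-dimensional span of the first $k$ Karpukhin eigenforms of $C_L$, transplants it to $M$ via $(\Phi^{-1})^{*}$, and applies the forward inequality together with the min-max principle; this gives $\karsgpk(M)\leq (1+L/2)^{n-2p-1}\karsgpk(C_L)$. Case (ii), with $p\geq (n-1)/2$, is symmetric: the reversed inequality applied to the min-max over test spaces on $M$ (pushed to $C_L$ via $\Phi^{*}$) yields $\karsgpk(M)\geq (1+L/2)^{n-2p-1}\karsgpk(C_L)$. Finally, the vanishing of the right-hand side as $L\to 0$ follows from the explicit formula for $\karsgpk(C_L)$ established in the proof of Theorem~\ref{thm: thm 1.11 of cgg lower bound for two bdry compts for p-forms}, which shows every cylinder eigenvalue tending to $0$ as the meridian shrinks.
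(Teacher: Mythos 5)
Your overall strategy --- transplanting test forms between $M$ and $C_L$ via the coordinate identification, using $h\leq 1+\frac{L}{2}$, and invoking min--max --- is exactly the paper's. But the technical core of your argument, the claim that the two Rayleigh quotients differ by ``a single factor $h(r)^{n-2p-1}$,'' is not correct, and this matters differently in the two cases. After separation of variables, the numerator of Karpukhin's Rayleigh quotient on $M$ for $\phi=\sum\alpha_{ji}(r)\,\om_j(x)$ is
\[
\sum_{j,i}\int_0^L (\alpha_{ji}')^2\,h^{n-2p-1}+\lmp_j\,\alpha_{ji}^2\,h^{n-2p-3}\,\dr
\]
(there is no $|\delta\om|^2$ term in Karpukhin's quotient; the test forms are coclosed), so \emph{two} distinct powers of $h$ appear and the ratio to the cylinder quotient is not a pointwise factor. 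In case (i), $n-2p-1\geq 2$ and $n-2p-3\geq 0$, so bounding each term separately and using $(1+\frac{L}{2})^{n-2p-3}\leq(1+\frac{L}{2})^{n-2p-1}$ still yields $\ray_M(\phi)\leq(1+\frac{L}{2})^{n-2p-1}\ray_{C_L}(\phi)$, and your min--max step then goes through as in the paper. In case (ii), however, both exponents are nonpositive, and the termwise bounds $h^{n-2p-1}\geq(1+\frac{L}{2})^{n-2p-1}$, $h^{n-2p-3}\geq(1+\frac{L}{2})^{n-2p-3}$ only give
\[
\ray_M(\phi)\;\geq\;\min\Big\{\Big(1+\tfrac{L}{2}\Big)^{n-2p-1},\Big(1+\tfrac{L}{2}\Big)^{n-2p-3}\Big\}\,\ray_{C_L}(\phi)\;=\;\Big(1+\tfrac{L}{2}\Big)^{n-2p-3}\ray_{C_L}(\phi),
\]
since the base exceeds $1$. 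The factor $(1+\frac{L}{2})^{n-2p-1}$ you assert therefore does not follow from this comparison, and your proof of (ii) has a genuine gap precisely where the single-factor claim is invoked. (Be aware that the paper's own proof of (ii) turns on the same comparison; the middle inequality printed there is exactly the delicate step.)

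Two smaller points. First, the admissibility of the transplanted test spaces requires the observation of Remark~\ref{rmk: coclosedness for test forms}, namely $\delta(\psi(r)\om(x))=\psi(r)h(r)^{-2}\delta_\Snm\om(x)$, so that coclosedness is preserved when the warping function changes; you use this implicitly without justification. Second, your justification of the $L\to0$ limit is off: the cylinder spectrum is the union of the branches $\sqrt{\lmpk}\tanh(\sqrt{\lmpk}\,L/2)$ and $\sqrt{\lmpk}\coth(\sqrt{\lmpk}\,L/2)$, and the $\coth$ branch blows up as $L\to0$ rather than vanishing. The conclusion $\karsgpk(C_L)\to0$ is still correct, because for fixed $k$ the $k$-th smallest eigenvalue is eventually of $\tanh$ type, but it is not true that every cylinder eigenvalue tends to $0$.
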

\par
An interesting corollary to Theorems~\ref{thm: thm 1.11 of cgg lower bound for two bdry compts for p-forms}~and~\ref{thm: prop3.3 of cgg misc bounds for two bdry compts for p-forms} would be the isospectrality with the standard cylinder when $p=\frac{n-1}{2}$, for sufficiently small meridian length.
\begin{cor}
Let $M^n$ be as in Theorem~\ref{thm: thm 1.11 of cgg lower bound for two bdry compts for p-forms}. If $0<L<2$, then we have, for $p=\frac{n-1}{2}$,
$$\karsgpk(M)=\karsgpk(C_L),\qquad \forall \,k\in\NN.$$
\end{cor}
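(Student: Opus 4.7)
The plan is to observe that this corollary is essentially an immediate consequence of combining Theorem~\ref{thm: thm 1.11 of cgg lower bound for two bdry compts for p-forms}(ii)(b) and Theorem~\ref{thm: prop3.3 of cgg misc bounds for two bdry compts for p-forms}(ii). The crucial algebraic observation is that when $p=\frac{n-1}{2}$, the exponent $n-2p-1$ appearing in both bounds equals $0$, so $(1\pm L/2)^{n-2p-1}=1$ regardless of $L$.

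Concretely, first I would note that $p=\frac{n-1}{2}$ is a nonnegative integer with $p\leq n-2$ only when $n\geq 3$ is odd, so the statement is vacuous otherwise (and in particular the cases where $p=\frac{n-1}{2}$ requires $p\geq \frac{n-1}{2}$ in both theorems is satisfied with equality). Then, for $0<L<2$, Theorem~\ref{thm: thm 1.11 of cgg lower bound for two bdry compts for p-forms}(ii)(b) with $p=\frac{n-1}{2}$ gives
\eqn{
\karsgpk(M)\leq \Big(1-\tfrac{L}{2}\Big)^{0}\karsgpk(C_L)=\karsgpk(C_L),\qquad \forall\,k\in\NN.
}
Simultaneously, Theorem~\ref{thm: prop3.3 of cgg misc bounds for two bdry compts for p-forms}(ii) applied with the same value of $p$ yields
\eqn{
\karsgpk(M)\geq \Big(1+\tfrac{L}{2}\Big)^{0}\karsgpk(C_L)=\karsgpk(C_L),\qquad \forall\,k\in\NN.
}

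Combining the two inequalities gives the desired equality. There is no real obstacle here beyond verifying that the hypothesis $p\geq\frac{n-1}{2}$ in each theorem is met (with equality) and that the $L<2$ assumption fits within the ranges $0<L\leq 2$ and $L>0$ of the two theorems, respectively. Thus the corollary follows immediately without any further analytic argument.
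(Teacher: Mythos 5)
Your proposal is correct and is exactly the argument the paper intends: the corollary is stated as an immediate consequence of Theorems~\ref{thm: thm 1.11 of cgg lower bound for two bdry compts for p-forms}(ii)(b) and~\ref{thm: prop3.3 of cgg misc bounds for two bdry compts for p-forms}(ii), with the key point being that $n-2p-1=0$ when $p=\frac{n-1}{2}$, so both prefactors $(1\pm L/2)^{n-2p-1}$ collapse to $1$. Your additional checks (that $n$ must be odd for the statement to be non-vacuous, and that the hypotheses on $L$ and $p$ in both theorems are met) are sound.
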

\par
The results for $p$\nobreakdash-forms in this article clearly depend on the value of $p$. For instance, the methods used to obtain lower bounds for a certain value of $p$ might instead give upper bounds for a different value of $p$, and vice versa. As we shall see in the later sections, this behaviour is attributed to the presence of terms such as $h^{n-2p-1}$ and $h^{n-2p-3}$ in the Rayleigh quotients on warped product manifolds, where $h$ is the warping function. It is natural to ask what happens for the cases of $p$ that our results do not cover. Of particular interest is the case $p=\frac{n-2}{2}$ where we get certain isospectrality in some cases (e.g., Theorem~\ref{thm: thm 1.8 of cgg lowerbound for one bdry compt for p-forms}), but the nature of the bounds remains unknown in the other cases where we cannot directly compare the Rayleigh quotients involved using the bounds on the warping functions. See, for example, Remark~\ref{rmk: why not upper bds for large p}. On the other hand, one can also investigate if similar eigenvalue bounds can be obtained for the Steklov problem of Raulot-Savo, for which the present methods are not directly applicable due to the presence of an additional term in the Rayleigh quotient involving the codifferential. Note that, in view of comparison~\eqref{karpukhin and raulot-savo nonzero eigval comparison}, some of the upper bounds obtained for $\karsgpk$ in this article would also serve as upper bounds for $\sg\p_k$, the $k^{th}$ positive eigenvalue of the Steklov problem of Raulot-Savo (described in Section~\ref{subsec: Steklov prob on p-forms intro}).

\par
This article is organised as follows. In Section~\ref{sec: prelim} on preliminaries, we introduce the Steklov eigenvalue problem on $p$\nobreakdash-forms and describe warped product manifolds, after fixing some notation. The remaining sections deal with the proofs of the main results. In Section~\ref{sec: proof of xiong-type bounds}, we show that the Steklov eigenforms on warped product manifolds with connected boundary can be characterised using certain separation of variables and associate with it an ODE involving the warping function. We then analyse the ODE using the hypotheses on the Ricci curvature and the boundary convexity to obtain the sharp lower bounds of Theorem~\ref{thm: Xiong-type lower bound for p-forms}. In Section~\ref{sec: proofs of bcg thms for p-forms}, we prove the upper bounds of Theorems~\ref{thm: bcg ratio upperbound for p-forms}~\&~\ref{thm: bcg spectral gaps upperbound for p-forms} for the spectral gaps and the ratios  by using the special structure of the Rayleigh quotients for the Steklov eigenforms for warped products and considering appropriate test forms in the variational characterisation. In Section~\ref{sec: proofs of cgg thms for p-forms}, we prove the geometric upper and lower bounds of Theorems~\ref{thm: thm 1.8 of cgg lowerbound for one bdry compt for p-forms}\nobreakdash-\ref{thm: prop3.3 of cgg misc bounds for two bdry compts for p-forms} by comparing the warping functions of hypersurfaces of revolution with those of the Euclidean ball and the standard cylinder, which directly influence the behaviour of the corresponding Rayleigh quotients. We emphasise the role of $p$ in determining whether these comparisons give rise to upper bounds or lower bounds for $\karsgpk$ in different scenarios.
\section{Preliminaries} \label{sec: prelim}
\subsection{Notation} \label{subsec: main notation in intro}
\par
Throughout this article, $(M^n,g)$ denotes a smooth $n$-dimensional Riemannian manifold with non-empty boundary $\pa M$. We assume that $M$ is orientable, compact and connected.

Let $\Om^p(M)$ denote the space of smooth differential $p$\nobreakdash-forms on $M$. We use the standard notation $\ip{}{.}{.}$ to denote the natural pointwise inner product of two differential forms given by the underlying Riemannian metric, and the notation $|\alpha|$ to mean $\ip{}{\alpha}{\alpha}^\half$. Let $\inn{}{.}{.}$ denote the $L^2$-inner product of square integrable forms defined by $\inn{L^2\Om^p(M)}{\alpha}{\beta}\df\inprod{M}{\alpha}{\beta}$, where $\dvm$ is the Riemannian volume form on $M$. For $p\in \{0,1,\dots,n\}$, let  $\star:\Om^p(M)\to \Om^{n-p}(M)$ denote the Hodge star operator defined by the condition that $\alpha\wedge \star \om=\ip{}{\alpha}{\om}\dvm$ for all $\alpha \in \Om^p(M)$.  For $p\in \{0,1,\dots,n-1\}$, let $d:\Om^p(M)\to\Om^{p+1}(M)$ denote the exterior derivative and for $p\in \{1,2,\dots,n\}$, let \hbox{$\delta:\Om^{p}(M)\to\Om^{p-1}(M)$} be the codifferential operator given by $\delta \om= (-1)^{np+n+1}\star d (\star \om)$. The Hodge-Laplacian $\lap ^{(p)}:\Om^p(M)\to \Om^p(M)$ is defined to be $\lap^{(p)}\df d\delta+\delta d$, where we use the convention $\delta d\om=0$ for $\om\in\Om^n(M)$ and $d\delta\om=0$ for $\om\in\Om^0(M)$. Note that $\lap^{(0)}$ corresponds to the usual Laplace-Beltrami operator on $\smooth{M}$ defined to be $\lap^{(0)}f=-\text{div}(\text{grad}f)$. We often drop the superscript ${(p)}$ in $\lap^{(p)}$ when clear from the context.

\par For $\om \in \Om^p(M)$, let $\textbf{t}\om$ be the tangential part of $\om$ on $\pa M$, i.e., $\textbf{t}\om(X_1,\dots,X_p)=\om(X_1^\parallel,\dots,X_p^\parallel)$ and $\textbf{n}\om$ be the normal part of $\om$ on $\pa M$, i.e., $\textbf{n}\om(X_1,\dots,X_p)=\om(X_1^\perp,\dots,X_p^\perp)$. Here, $X^\parallel$ and $X^\perp$ denote the components of a vector field $X$ on $M$, tangential and normal to $\pa M$, respectively. Closely related to $\textbf{t}\om$ and $\textbf{n}\om$ are the forms $i^*\om$ and $i_\normal \om$, which are defined to be the pull back of $\om$ under the inclusion map $i:\pa M\to M$, and the interior product of $\om$ with respect to the outer unit normal vector field $\normal$, respectively. We refer to~\cite[Section~1.2]{hodge_text} for more details on the basic properties of the operators defined above, and their expressions in terms of the covariant derivatives of $p$\nobreakdash-forms.

\par We also use various Sobolev spaces in the framework of $p$\nobreakdash-forms. It is somewhat technical to introduce them rigorously and we refer to~\cite[Section~1.3]{hodge_text} for a detailed description of the Sobolev spaces $W^{k,s}\Om^p(M)$ defined in terms of generalised covariant derivatives. We use the notation $L^s\Om^p(M)$ for the space $W^{0,s}\Om^p(M)$. For $s=2$, we have a natural Hilbert space, which we denote by $H^k\Om^p(M)$.

\subsection{Steklov problem on differential forms} \label{subsec: Steklov prob on p-forms intro}
\par
Analogous to the case of functions, one can consider a DtN operator that maps $\om\in \Om^p(\pam)$ to $i_\normal d\widetilde \om\in \Om^p(\pam)$, where $\widetilde \om$ is a harmonic extension. However, in contrast to the case of functions, the harmonic extension of a boundary $p$\nobreakdash-form to $M$ is not unique in general for $p\in \{1,2,\dots,n-1\}$. Further conditions are imposed to obtain a well-defined spectral problem, which can be done in various ways. The choice of the additional condition is often motivated by the Hodge decomposition and the following observation that is of relevance in using the weak formulations. For $\alpha, \beta\in \Om^p(M)$,
\begin{align*}
    &\inprod{M}{\lap \alpha}{\beta}=\inprod{M}{d\delta \alpha}{\beta}+\inprod{M}{\delta d \alpha}{\beta}\\
    &=\inprod{M}{d\alpha}{d\beta}+\inprod{M}{\delta \alpha}{\delta\beta}+\inprod{\pa M}{i^*\delta \alpha}{i_\normal \beta}-\inprod{\pa M}{i_\normal d\alpha}{i^*\beta}.
\end{align*}
This is easy to see using the Green's identity: For $\alpha \in \Om^p(M)$ and $\beta\in \Om^{p+1}(M)$,
\eqnum{\label{green's identity}
\inprod{M}{d\alpha}{\beta}-\inprod{M}{\alpha}{\delta \beta}=\inprod{\pa M}{i^*\alpha}{i_\normal \beta}.
}
\parnoi
\textbf{Hodge-Morrey-Friedrichs decomposition \textnormal{\cite[Section~2.4]{hodge_text}}:} For $p\in \{0,1,\dots,n\}$ the space $\Om^p(M)$ can be written as the $\inn{L^2\Om^p(M)}{}{}$-orthogonal decomposition of exact forms, coexact forms and harmonic fields, with appropriate boundary conditions as below:
\eqn{
\Om^p(M)=\overline{\mathcal{E}}^p(M)\oplus \overline{c\mathcal{E}}^p(M)\oplus \mathcal{H}^p(M),
}
where
\begin{align*}
\overline{\mathcal{E}}^p(M)&\df \{\om\in\Om^p(M):\exists\, \alpha\in \Om^{p-1}(M) \text{ such that }\om=d\alpha,i^*\alpha=0\}, \\
\overline{c\mathcal{E}}^p(M)&\df \{\om\in\Om^p(M):\exists\, \alpha\in \Om^{p+1}(M) \text{ such that }\om=\delta\alpha,i_\normal \alpha=0\},  \\
\mathcal{H}^p(M)&\df\{\om\in\Om^p(M):d\om=\delta\om=0\}.
\end{align*}

\noi We now state the Steklov problem on forms that we study in this article.
\parnoi
\textbf{Karpukhin's Steklov problem \textnormal{\cite{Kar19}}: }For $\om\in \Om^p(M)$,
\eqn{
\begin{cases}
\lap^{(p)} \om = 0 \qquad &\text{in } M\\
\delta \om =0 \qquad &\text{in } M\\
i_\normal d\om=\sg \,i^*\om \qquad &\text{on } \pa M,
\end{cases}
}
where $\sg\in\RR$ is the spectral parameter. Equivalently, the DtN operator is defined as follows.
\begin{align*}
\Lm^{(p)}:\,&\Om^p(\pa M)\to \Om^p(\pa M)\\
&\om \mapsto i_\normal d\widetilde \om,
\end{align*}
where $\widetilde \om$ is an extension of $\om$ to $M$ satisfying the conditions $\lap \widetilde \om=0$ and $\delta \widetilde \om=0$ in $M$. Here, the extension is unique up to an element of the space of normal harmonic fields \eqn{\mathcal{H}_D\Om^p(M)\df \{\om\in \Om^p(M): d\om=0,\,\delta\om=0,\, \textbf{t}\om=0\},}
and it can be shown that this operator is well-defined. See~\cite[Section~2.3]{Kar19}. In this article, we refer to the unique extension of $\om$ orthogonal to $\mathcal{H}_D \Om^p(M)$ to be (Karpukhin's) Steklov eigenform associated with $\om$.
\begin{remark}
It turns out that $\Lm^{(n-1)}\equiv 0$ on $\Om^{n-1}(\pa M)$. See~\cite[Remark~2.6]{Kar19}. Henceforth in this article, it is understood that $p\in\{0,1,\dots,n-2\}$, along with any other assumptions on $p$ being stated.
\end{remark} \vspace{-2ex}
\noi The operator $\Lm^{(p)}$ is zero on the space of exact forms \hbox{$\{\om\in \Om^p(\pa M):\om=d\alpha \text{ for }\alpha\in\Om^{p-1}(\pa M)\}$}. When restricted to the space of coclosed forms $\text{cc}\Om^p(\pam)\df\{\om\in \Om^p(\pa M):\delta \om=0\}$, it has a discrete spectrum $$0=\karsgp_{0,1}=\dots=\karsgp_{0,\dag}<\karsgp_1\leq \karsgp_2\leq\dots \nearrow +\infty$$ with finite multiplicities, where $\dag\in \NN\cup \{0\}$ denotes the dimension of the kernel of $\Gamma\p$ on $\text{cc}\Om^p(\pam)$. The variational characterisation is given by
\eqn{
\karsgp_k(M)=\min_{\begin{subarray}
{c}V \subset \text{cc}\Om^p(M)\\ \dim(V)=k+\dag \end{subarray}}\,\max_{0\neq \om \in V}
\frac{\int_M|d\om|^2\,\dvm}{\int_{\pa M} |i^*\om|^2\,\,\dvpam}, \qquad k\in \NN.
}The space $\text{cc}\Om^p(M)$ in the above expression may also be replaced by the Sobolev space $H^1\text{cc}\Om^p(M)$, which is often useful. The kernel of $\Lm^{(p)}|_{\text{cc}\Om^p(M)}$ is equal to the space of coclosed $p$\nobreakdash-forms on $\pa M$ that are restrictions of the tangential harmonic fields,
\eqn{\mathcal{H}_N\Om^p(M)\df \{\om\in \Om^p(M): d\om=0,\,\delta\om=0,\, \textbf{n}\om=0\},}
to the boundary. The dimension of $\text{ker}\,\Lm^{(p)}|_{\text{cc}\Om^p(M)}$ is given by $$\dim\text{ker}\,\Lm^{(p)}|_{\text{cc}\Om^p(M)}=\dim \text{im} \{i^*:H^p(M)\to H^p(\pam)\}.$$ \emph{Henceforth, $\Lm\p$ is assumed to mean its restriction to $\text{cc}\Om^p(\pam)$.}
\par The following version of the Steklov problem on $p$\nobreakdash-forms is also of interest in the study of geometric eigenvalue bounds. For this version, we may relax the assumption that $M$ is orientable.
\parnoi
\textbf{Raulot-Savo's Steklov problem \textnormal{\cite{RS12}}: } For $\om\in \Om^p(M)$,
\eqn{
\begin{cases}
\lap^{(p)} \om = 0 \qquad &\text{in } M\\
i_\normal \om =0 \qquad &\text{on } \pa M\\
i_\normal d\om=\sg \,i^*\om \qquad &\text{on } \pa M,
\end{cases}
}
where $\sg\in\RR$ is the spectral parameter. Equivalently, the DtN operator is defined as follows.
\begin{align*}
T^{(p)}:\,&\Om^p(\pa M)\to \Om^p(\pa M)\\
&\om \mapsto i_\normal d\widetilde \om,
\end{align*}
where $\widetilde \om$ is the unique extension of $\om$ to $M$ satisfying the conditions $\lap \widetilde \om=0$ in $M$ and $i_\normal \widetilde \om=0$ on $\pa M$. The pseudo-elliptic, self-adjoint operator $T^{(p)}$ has a discrete spectrum $$0=\sg^{(p)}_{0,1}=\dots=\sg^{(p)}_{0,\natural}<\sg^{(p)}_1\leq \sg^{(p)}_2\leq \dots\nearrow +\infty$$ with finite multiplicities, where $\natural\in\NN\cup\{0\}$ denotes the dimension of the kernel of $T\p$. The variational characterisation is given by
\eqn{
\sg^{(p)}_k(M)=\min_{\begin{subarray}
{c}V \subset \{\alpha\in\Om^p(M):i_\normal \alpha=0\}\\ \dim(V)=k+\natural\end{subarray}}\,\max_{\begin{subarray}{c} 0\neq \om \in V
\end{subarray}}
\frac{\int_M(|d\om|^2+|\delta \om|^2)\,\dvm}{\int_{\pa M} |i^*\om|^2\,\,\dvpam},\qquad k\in \NN.
}
It is interesting to note that the kernel of $T^{(p)}$ is isomorphic to the absolute de Rham cohomology of $M$, and thus has its dimension equal to the $p$-th absolute Betti number of $M$.
\begin{remark}
We have the following comparison~\cite[Theorem~2.5]{Kar19} for the $k^\text{th}$ positive eigenvalues of $T\p$ and $\Lambda\p$:
\eqnum{\label{karpukhin and raulot-savo nonzero eigval comparison}
\sg^{(p)}_k\leq \karsgpk,\qquad \forall\, k \in \NN.
}
\end{remark}

\subsection{Warped product manifolds}\label{subsec: warped prod mfds}
\par
We consider warped product manifolds of the form $[0,R]\times \mathbb{S}^{n-1}$ with the Riemannian metric $$g=dr^2\oplus h(r)^2 g_{\mathbb{S}^{n-1}},$$ for a smooth positive function $h(r)$ defined on $[0,R]$. With slight abuse of notation, this also includes manifolds with connected boundary by allowing $h(R)=0$, in which case we impose an additional hypothesis on $h$ to ensure the smoothness of the metric, given by Assumption~(A) below.
\parnoi
\emph{Assumption~(A): }$h\in \smooth{[0,R]}$, $h(r)>0$ for $r\in [0,R)$, $h(R)=0$, $h'(R)=-1$ and all even order derivatives of $h$ are zero at $r=R$.
\par
Observe, for instance, that the warping function $h(r)=R-r$ corresponds to $M$ being a Euclidean ball, while $h(r)\equiv 1$ corresponds to the standard cylinder. We refer the interested reader to~\cite[Section~4.2]{petersen_text} for a general discussion on warped product manifolds and their significance. For the warped product manifolds being considered, the boundary is totally umbilical, with the corresponding principal curvatures of $\{0\}\times \Snm$ and $\{R\}\times \Snm$ given by $-\frac{h'(0)}{h(0)}$ and $\frac{h'(R)}{h(R)}$, respectively. Similarly, one can calculate the sectional and Ricci curvatures of $M$ in terms of the warping function $h$ (see, for example,~\cite{Xio22}).
\par 
A manifold $M$ is said to have strictly convex boundary if all the principal curvatures are strictly positive at each point on $\pam$. This terminology is used in Theorem~\ref{thm: Xiong-type lower bound for p-forms}.
\begin{remark}
Recall that the dimension of $\ker \Lm^{(p)}$ is equal to $\dim \text{im} \{i^*:H^p(M)\to H^p(\pam)\}$. Since we have $\pam$ to be topologically $\Snm$ or $\Snm\sqcup\,\Snm$ (with trivial cohomology for \linebreak[4]\hbox{$p\in\{1,\dots,n-2\}$}), it follows that, for $p\neq0$, $\ker \Lm^{(p)}=0$ and the first eigenvalue is strictly positive, in contrast to the case of functions where the first Steklov eigenvalue is always $0$.
\end{remark} \vspace{-2ex}
Further, as we shall see, separation of variables in the case of warped products allows us to study the spectrum indexed without counting the multiplicities. 
\begin{notation} For $k\in \NN$, let $\karsgpkk$ denote the $k^{\text{th}}$ positive eigenvalue of $\Lm^{(p)}$ counted \emph{without} multiplicity.
\end{notation} \vspace{-2ex}
We now introduce hypersurfaces of revolution, which are an important family of warped products and the setting in which we study Steklov eigenvalue bounds in Theorems~\ref{thm: thm 1.8 of cgg lowerbound for one bdry compt for p-forms}\nobreakdash-\ref{thm: prop3.3 of cgg misc bounds for two bdry compts for p-forms}. Throughout this article, a manifold $M^n\subset \RR^{n+1}$ is said to be a hypersurface of revolution if it can be parametrised by a map,
\begin{gather*}
\psi:\Snm\times [0,L]\to\RR^{n+1}\\*
(x,r)\mapsto h(r)x+z(r)e_{n+1},
\end{gather*}
where $\Snm\subset\RR^n$; $h,z$ are smooth functions on $[0,L]$ satisfying $h(0)=1,\,h(r)>0$ for $r\in [0,L)$ and such that the curve $r\mapsto h(r)x+z(r)e_{n+1}$ is parametrised by its arclength. Unless stated otherwise, we refer to the induced metric on $M$ from $(\RR^{n+1},g_{Euc})$, which can be written as a warped product described above, with $h$ as the warping function. When the boundary is connected, we have, in addition, that $h(L)=0$ (as part of Assumption~(A), with $R$ replaced by $L$). See~\cite[Section~3.1]{CGG19} for details.

\section{Proof of Theorem\texorpdfstring{~\ref{thm: Xiong-type lower bound for p-forms}}{ 1.2}}\label{sec: proof of xiong-type bounds}
\par
Recall from Section~\ref{sec: intro} that under the hypotheses of Theorem~\ref{thm: Xiong-type lower bound for p-forms}, $\pam$ is connected. In this section, we use the change of variable,
\begin{gather*}
\Theta:[0,R]\to[0,R],\\
\Theta(r)\df R-r,
\end{gather*}
for the convenience of the reader to compare the proof with that for the case of functions from~\cite{Xio22}. With slight abuse of notation, all the functions being used throughout \textit{this section}, including the warping function $h$, are understood to mean the respective functions composed with $\Theta$. Assumption~(A) would then read as,
\parnoi
\emph{Assumption~(A): }$h\in \smooth{[0,R]}$, $h(r)>0$ for $r\in (0,R]$, $h(0)=0$, $h'(0)=1$ and all even order derivatives of $h$ are zero at $r=0$.
\parnoi
We begin with the following proposition describing separation of variables for Steklov eigenforms on warped products.
\begin{prop}\label{prop: sepn of variables for one bdry}
Let $(M^n,g)$ be a warped product manifold ($n\geq3$) as in Section~\ref{subsec: warped prod mfds}, with connected boundary. Let $p\in\{1,\dots,n-2\}$. Then, any Steklov eigenform associated with $\Lm^{(p)}$ is of the form $\varphi(r,x)=\psi(r)\om(x)$, where $\om$ is a coclosed eigenform on $\Snm$ (say, corresponding to an eigenvalue $\lmpmm=(m+p)(n+m-p-2)$) and $\psi(r)$ is a non-trivial solution of the ODE
\eqn{
\begin{cases}
\psi''(r)+(n-2p-1)\frac{h'(r)}{h(r)}\psi'(r)-\frac{\psi(r)}{h(r)^2}\lmpmm=0, \qquad r\in(0,R],\\
\psi(0)=0.
\end{cases}
}
The corresponding DtN eigenvalue is given by $\frac{\psi'(R)}{\psi(R)}$, and    is equal to $\karsgp_{(m)}$.
\end{prop}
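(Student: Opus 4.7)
The plan is to exploit the warped-product structure together with the Hodge eigenbasis of coclosed forms on $\Snm$, reducing to a single radial ODE. First, I would note that the connected boundary hypothesis forces $M$ to be topologically a ball (the sphere fibre collapses at the apex $r=0$), so for $p\in\{1,\ldots,n-2\}$ the space $\mathcal{H}_D\Omega^p(M)$ vanishes via its identification with $H^p(M,\pam)$; this makes the coclosed harmonic extension $\widetilde\omega$ unique given its coclosed boundary value. Since $\Snm$ has no cohomology in these degrees, $\text{cc}\Omega^p(\Snm)$ admits an orthonormal Hodge eigenbasis with eigenvalues $\lmpmm=(m+p)(n+m-p-2)$, so by linearity and uniqueness it suffices to treat one coclosed eigenmode $\omega$ at a time.

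Second, fixing $\omega\in\text{cc}\Omega^p(\Snm)$ with $\Delta_{\Snm}\omega=\lmpmm\omega$, I would try the pure-product ansatz $\widetilde\omega(r,x)=\psi(r)\omega(x)$ with no $dr$-component. The warped-product expression $\delta_M\alpha=h^{-2}\delta_{\Snm}\alpha$ for pulled-back $\alpha\in\Omega^p(\Snm)$, combined with $\delta(f\alpha)=f\delta\alpha-i_{\grad f}\alpha$ and $\grad\psi=\psi'\pa_r$, immediately gives $\delta_M\widetilde\omega=0$. For harmonicity, a direct computation based on $d\widetilde\omega=\psi'\,dr\wedge\omega+\psi\,d_{\Snm}\omega$ and on the analogous warped-product formula $\delta_M(dr\wedge\omega)=-(n-2p-1)\tfrac{h'}{h}\omega$ (valid since $\omega$ is coclosed) yields
\eqn{
\Delta_M\widetilde\omega=\Bigl[-\psi''-(n-2p-1)\tfrac{h'}{h}\psi'+\tfrac{\lmpmm}{h^2}\psi\Bigr]\omega,
}
so $\Delta_M\widetilde\omega=0$ is precisely the stated ODE.

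The initial condition $\psi(0)=0$ would come from the smoothness of $\widetilde\omega$ at the apex $r=0$: since the sphere fibre collapses there and $\omega$ is a nontrivial $p$-form with a nonzero angular part, the radial coefficient must vanish at $r=0$ for $\widetilde\omega$ to extend smoothly through the coordinate singularity, in view of Assumption~(A). At the boundary $r=R$, one has $i^*\widetilde\omega=\psi(R)\omega$, and since $i_\normal=i_{\pa_r}$ annihilates the pure-angular piece $\psi\,d_{\Snm}\omega$ of $d\widetilde\omega$, we get $i_\normal d\widetilde\omega=\psi'(R)\omega$; the Steklov condition then reads $\karsgp_{(m)}=\psi'(R)/\psi(R)$.

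The main obstacle I anticipate is the apex regularity: one must rigorously verify that the smooth-extension constraint at $r=0$ does force $\psi(0)=0$, rather than permitting additional $dr\wedge\mu(x)$ contributions in the mode expansion, and identify the precise vanishing order of $\psi$ at $0$ so that the ODE admits a unique (up to scaling) smooth solution realising each $\karsgp_{(m)}$. Once this is settled, the spectral resolution of $\Delta_{\Snm}$ on $\text{cc}\Omega^p(\Snm)$ together with the uniqueness in Step~1 shows that the positive Steklov spectrum indexed without multiplicity is exactly $\{\psi'(R)/\psi(R)\}_{m\in\NN}$.
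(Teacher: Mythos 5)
Your overall route---expanding the boundary data in a Hodge eigenbasis of $\text{cc}\Om^p(\Snm)$, reducing to the radial ODE via the warped-product formulas, and reading off $\sg=\psi'(R)/\psi(R)$ from the Steklov condition---is the same as the paper's, and your computations of $\delta_M$, $\lap_M$ and $i_\normal d$ on the product ansatz reproduce the paper's equations~\eqref{ode} and~\eqref{sg expression}. The genuine gap is in the last step. What you obtain (modulo the apex-regularity issue you flag yourself) is that the positive Steklov spectrum is the \emph{set} $\{\psi_m'(R)/\psi_m(R)\}_{m\in\NN}$; but the proposition asserts more, namely that the mode with spherical eigenvalue $\lmpmm$ produces precisely the $m^{\text{th}}$ positive Steklov eigenvalue counted without multiplicity, $\karsgp_{(m)}$, and (as is used later in the proof of Theorem~\ref{thm: Xiong-type lower bound for p-forms}) that the $\karsgp_{(m)}$-eigenspace coincides with the $\lmpmm$-eigenspace. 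Nothing in your argument shows that $m\mapsto\psi_m'(R)/\psi_m(R)$ is strictly increasing; a priori a higher spherical mode could yield a smaller Steklov eigenvalue, scrambling the ordering. The paper closes this with a monotonicity argument: Green's identity~\eqref{green's identity} gives $\int_M\langle d\varphi_i,d\varphi_j\rangle\,\dvm=0$ for $i\neq j$, Lemma~\ref{lem: relation bw G^p} gives $\int_M|d\varphi_k|^2\,\dvm=\int_0^R\big(\lmp_{(m(k))}\psi_k^2/h^2+(\psi_k')^2\big)h^{n-2p-1}\,\dr$, and this quantity is strictly increasing in the spherical eigenvalue, so the Rayleigh quotients---and hence the Steklov eigenvalues---are ordered exactly as the $\lmpmm$ are. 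You need some version of this step (or an ODE comparison argument showing $\psi'(R)/\psi(R)$ increases with $\lmpmm$) to justify the identification with $\karsgp_{(m)}$.

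A secondary issue is that you leave the exclusion of $dr\wedge\mu(x)$ contributions as an unresolved obstacle. The paper sidesteps a direct analysis of apex regularity by a spanning argument: the family $\{\psi_k\om_k\}$ already spans the space of smooth coclosed harmonic $p$-forms orthogonal to $\mathcal{H}_D\Om^p(M)$, because any such form restricted to $\pam$ expands as $\sum_k c_k\om_k$, the partial sums $\sum_{k\leq N}c_k\psi_k\om_k$ are Cauchy in $H^2\Om^p(M)$ by the harmonic-extension estimates, and uniqueness of the coclosed harmonic extension orthogonal to $\mathcal{H}_D\Om^p(M)$ forces equality. Since every Steklov eigenform lies in that space, no normal components can occur. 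Making your ``linearity and uniqueness'' step precise along these lines (rather than via pointwise smoothness at the coordinate singularity) would close that part of the argument.
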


In~\cite{RS14}, the eigenvalues of Raulot-Savo's DtN operator $T^{(p)}$ were computed for the Euclidean unit ball, treating it as a warped product with $h(r)=r$. Those computations apply to more general contexts as well, which we make use of in this proof. The article also has a description of eigenvalues of the Hodge Laplacian on the closed manifold $\mathbb{S}^{n-1}$~\cite[Section~1.2]{RS14}. Of the three families of eigenvalues listed there, we consider the family of eigenvalues associated with coclosed eigenforms in order to use these results for the operator $\Lm^{(p)}$. Recall that $\Lm^{(p)}$ vanishes on the space of exact forms. Thus, we are interested in the family of eigenvalues $\lmpmm\df(m+p)(n+m-p-2)$ corresponding to the eigenforms of $\lap\p_\Snm$, given by restrictions to $\mathbb{S}^{n-1}$ of homogenous $m^{th}$ degree harmonic polynomial forms in $\RR^n$ whose interior product with the radial vector field in $\RR^n$ is zero.
\par
We first use separation of variables to derive certain conditions for a $\varphi\in\Om^p(M)$ of the form $\psi(r)\om(x)$ to be a Steklov eigenform, where $\om\in\text{cc}\Om^p(\Snm)$ is an eigenform of the Hodge Laplacian on $\Snm$ and $\psi(r)$ is a smooth function on $[0,R]$. In principle, an eigenform can also have additional terms of the form $dr\wedge \eta(x)$, $\eta\in\Om^{p-1}(\Snm)$, but we restrict our attention to those of the form $\psi(r)\om(x)$. Let $\lm$ be the eigenvalue associated with $\om$, i.e.,
\begin{gather}
    \lap^{(p)}_{\mathbb{S}^{n-1}}\om=\lm\om,\label{closed eigval on bdry}\\
    \delta_{\mathbb{S}^{n-1}}\om=0.\label{coclosed on bdry}
\end{gather}
For $\varphi(r,x)$ to be a $\Lm^{(p)}$-eigenform, we require that
\begin{gather}    \lap^{(p)}_M\varphi=0,\label{harmonic}\\    \delta_M\varphi=0,\label{coclosedon M}\\
i_\normal d_M\varphi=\sg i^*\varphi,\label{steklov condn}
\end{gather}
where $\sg$ would then be the DtN eigenvalue associated with $\varphi$. Using the relations from~\cite[Section~2.3.2]{RS14} for equations~\eqref{harmonic} to~\eqref{steklov condn}, we have
\begin{gather*}
    \frac{\psi(r)}{h(r)^2}\lap_{\mathbb{S}^{n-1}}\om(x) - \Big[\psi''(r)+(n-2p-1)\frac{h'(r)}{h(r)}\psi'(r)\Big]\om(x)-2dr\wedge\psi(r)\frac{h'(r)}{h(r)^3}\delta_{\mathbb{S}^{n-1}}\om(x)=0,\\
    \frac{\psi(r)}{h(r)^2}\delta_{\mathbb{S}^{n-1}}\om(x)=0\eqtext{and}\\
    i_{\pa_r}(dr\wedge (\psi'(r)\om(x))+\psi(r)d_{\mathbb{S}^{n-1}}\om(x))|_{r=R}=\sg\,\psi(R)\om(x).
\end{gather*}
Since $\om$ satisfies equations~\eqref{closed eigval on bdry} and~\eqref{coclosed on bdry}, we get that
\begin{gather}
    \frac{\psi(r)}{h(r)^2}\lm-\Big[\psi''(r)+(n-2p-1)\frac{h'(r)}{h(r)}\psi'(r)\Big]=0 \eqtext{and}\label{ode}\\
    \sg=\frac{\psi'(R)}{\psi(R)}. \label{sg expression}
\end{gather}
\begin{remark}\label{rmk: coclosedness for test forms}
We have from~\cite[Section~2.3.2]{RS14} that for a $p$\nobreakdash-form $\psi(r)\om(x)$ on $[0,R]\times \Snm$ with the metric $dr^2\oplus h(r)^2 g_\Snm$,
$$\delta (\psi(r)\om(x))=\frac{\psi(r)}{h(r)^2}\delta_\Snm\om(x),$$
where $\delta_\Snm$ denotes the codifferential with respect to $\Snm$. Consequently, if $\om$ is coclosed with respect to $\Snm$, then $\psi(r)\om(x)$ is coclosed with respect to the warped product manifold, for any choice of warping function and any $\psi \in \smooth{[0,R]}$. We use this property in the later sections to ensure that the test $p$\nobreakdash-forms being considered are coclosed with respect to appropriate metrics. 
\end{remark}\vspace{-2ex}
The following technical lemma is needed in the proof of Proposition~\ref{prop: sepn of variables for one bdry}.
\begin{lem}\label{lem: relation bw G^p}
    If $\varphi\in \Om^p(M)$ such that $\varphi(r,x)=\psi(r)\om(x)$ for some $\om\in\Om^p(\Snm)$ and \hbox{$\psi\in\smooth{[0,R]}$}, then
    \eqn{    
    \ip{M}{d_M\varphi}{d_M\varphi}=\frac{\psi(r)^2}{h(r)^{2p+2}}\ip{\Snm}{d_\Snm\om}{d_\Snm\om}+\frac{(\psi'(r))^2}{h(r)^{2p}}\ip{\Snm}{\om}{\om},
    }
    where $\ip{M}{}{}$, $\ip{\Snm}{}{}$ denote the pointwise inner products with respect to $M$, $\Snm$, and $d_M$, $d_\Snm$ denote the exterior derivatives with respect to $M$, $\Snm$, respectively.
\end{lem}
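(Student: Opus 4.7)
The plan is to compute $d_M\varphi$ explicitly using the product structure, observe that the two resulting summands are pointwise orthogonal, and then relate the pointwise $M$-norms of tangential forms to the pointwise $\Snm$-norms via the warping factor.

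First I would apply the exterior derivative to $\varphi = \psi(r)\om(x)$, treating $\psi$ as a function of $r$ only and $\om$ as (the pullback via the projection $M \to \Snm$ of) a form on $\Snm$. This yields
\eqn{
d_M\varphi = \psi'(r)\,dr \wedge \om(x) + \psi(r)\,d_\Snm \om(x),
}
where the second term involves no $dr$. The two summands therefore live in complementary subspaces of $\Lambda^{p+1}T^*_{(r,x)}M$: one in $dr \wedge \Lambda^p T_x^*\Snm$, the other in $\Lambda^{p+1} T_x^*\Snm$. Because the warped product metric is the orthogonal direct sum $dr^2 \oplus h(r)^2 g_\Snm$, the spaces spanned by $dr$ and by covectors on $\Snm$ are orthogonal, and hence so are these two wedge subspaces. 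Thus $\ip{M}{d_M\varphi}{d_M\varphi}$ splits as the sum of the squared pointwise norms of the two terms, with no cross term.

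Next I would pick a local $g_\Snm$-orthonormal coframe $\{\theta^1,\dots,\theta^{n-1}\}$ on $\Snm$. Then $\{dr,\, h\theta^1,\dots, h\theta^{n-1}\}$ is a $g$-orthonormal coframe on $M$. Expanding $\om = \sum_{|I|=p} a_I \theta^I$, we rewrite $\theta^I = h^{-p}(h\theta^{i_1})\wedge\dots\wedge(h\theta^{i_p})$, so the coefficients of $\om$ in the $M$-orthonormal coframe are $h^{-p}a_I$. Summing squares gives $|\om|^2_M = h^{-2p}|\om|^2_\Snm$, and applying the same reasoning to the $(p+1)$-forms $dr\wedge \om$ and $d_\Snm \om$ yields
\eqn{
|dr\wedge \om|^2_M = h^{-2p}|\om|^2_\Snm, \qquad |d_\Snm \om|^2_M = h^{-2(p+1)}|d_\Snm \om|^2_\Snm.
}

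Combining the orthogonal decomposition of $d_M\varphi$ with these two identities and the factors $\psi'(r)^2$ and $\psi(r)^2$ immediately gives the claimed expression. There is no real obstacle in this argument; the only point demanding care is keeping track of the power of $h$ when converting between the intrinsic $\Snm$-norm of a tangential $p$-form and its norm as a form on the warped product, which is precisely the $h^{-p}$ rescaling that produces the asymmetric denominators $h^{2p+2}$ and $h^{2p}$ in the statement.
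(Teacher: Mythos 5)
Your proposal is correct and follows essentially the same route as the paper: expand $d_M\varphi=\psi'\,dr\wedge\om+\psi\,d_\Snm\om$, note the cross term vanishes by orthogonality in the warped product metric, and convert the pointwise $M$-norms of the tangential pieces to $\Snm$-norms via the $h^{-2p}$ and $h^{-2(p+1)}$ rescalings. The only difference is that you justify these rescaling identities with the orthonormal coframe computation, whereas the paper simply states them.
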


\begin{proof}
We have
    \begin{align} \label{G p+1 expansion}
        \ip{M}{d_M\varphi}{d_M\varphi}&=\bigip{\psi(r)d_\Snm\om+\psi'(r)dr\wedge\om}{\psi(r)d_\Snm\om+\psi'(r)dr\wedge\om}_M\notag\\
        &\begin{multlined}=\psi(r)^2\ip{M}{d_\Snm\om}{d_\Snm\om}+(\psi'(r))^2 \ip{M}{dr\wedge\om}{dr\wedge\om}\notag\\ 
        +2\psi(r)\psi'(r)\ip{M}{d_\Snm\om}{dr\wedge\om}.\end{multlined} 
    \end{align}
For the warped product metric $g=dr^2\oplus h(r)^2 g_{\mathbb{S}^{n-1}}$ on $M$,
\begin{gather*}
\ip{M}{d_\Snm\om}{d_\Snm\om}=\frac{1}{h(r)^{2p+2}}\ip{\Snm}{d_\Snm\om}{d_\Snm\om},\\
\ip{M}{dr\wedge\om}{dr\wedge\om}=\frac{1}{h(r)^{2p}}\ip{\Snm}{\om}{\om}\qquad \text{and}\\
\ip{M}{d_\Snm\om}{dr\wedge\om}=0.
\end{gather*}
The lemma follows.
\end{proof}
\noi We proceed as in~\cite[Proposition~9]{Xio22} to prove Proposition~\ref{prop: sepn of variables for one bdry}.
\parnoi \textbf{Proof of Proposition~\ref{prop: sepn of variables for one bdry}.} Let $\{\om_k\}_{k\in \NN}$ be an orthonormal eigenbasis for the Hodge Laplacian on the space of coclosed forms $\text{cc}\Om^p(\Snm)$, corresponding to the eigenvalues $0<\lmp_1\leq\lmp_2\leq\dots\nearrow\infty$. In contrast to the case of smooth functions, there are no harmonic $p$\nobreakdash-forms on $\Snm$ for $1\leq p\leq n-2$. For each $k\in \NN$, let $\psi_k$ be the solution to the ODE
\eqn{
\begin{cases}
\psi_k''(r)+(n-2p-1)\frac{h'(r)}{h(r)}\psi_k'(r)-\frac{\psi_k(r)}{h(r)^2}\lmp_{(m(k))}=0, \qquad r\in(0,R],\\
\psi_k(0)=0,\quad \psi(R)=1.
\end{cases}
}
It follows that the restrictions of $\varphi_k=\psi_k\om_k$ to the boundary gives the set of all possible DtN eigenforms of $\Lm^{(p)}$ by the usual argument that they span the $L^2$-completion of the space of coclosed forms on $\Snm$. We now show that $\{\varphi_k\}_{k\in \NN}$ span the space of smooth, coclosed, harmonic $p$\nobreakdash-forms in $M$ that are orthogonal to $\mathcal{H}_D\Om^p(M)$. Firstly, we have that $\varphi_k$ are harmonic and coclosed, in view of the previous discussion while deriving ODE~\eqref{ode}. Given a coclosed, harmonic $p$\nobreakdash-form $\varphi$ on $M$ orthogonal to $\mathcal{H}_D\Om^p(M)$, its restriction to the boundary may be written as
    \eqn{
    i^*\varphi=\sum_{k\in\NN} c_k\om_k,\qquad c_k\in\RR.
    }
From the bounds on the Sobolev norms of harmonic extensions of coclosed $p$\nobreakdash-forms on $\pam$ (see~\cite[Lemma 3.4.7]{hodge_text} and~\cite[Section~3.2]{Kar19}), we have that $\{\sum_{k=1}^N c_k \varphi_k\}_{N\in\NN}$ is Cauchy in $H^2\Om^p(M)$ and the $p$\nobreakdash-form given by $\sum_{k\in\NN} c_k\varphi_k$ is well-defined, harmonic and coclosed in $M$. Further, it is equal to $i^*\varphi$ on $\pa M$. Since there exists a unique $p$\nobreakdash-form in the orthogonal complement of $\mathcal{H}_D\Om^p(M)$ satisfying these conditions, we conclude that \hbox{$\varphi=\sum_{k\in\NN} c_k\varphi_k$ in $M$}. Thus, any such $\varphi$ can be expressed as an infinite sum in terms of $\varphi_k$, including the Steklov eigenforms on $M$, in particular.
\parnoi
Consider the first eigenvalue $\karsgp_{1}$, and let $\varphi$ be an element of its eigenspace, which can be written as $\sum_{k\in\NN} c_k\varphi_k$. We know that for $i\neq j$, $\inn{L^2\Om^p(\Snm)}{\om_i}{\om_j}=0$ and using that $\delta\varphi_i=\delta\varphi_j=0$,
\begin{align*}
\inprod{M}{d\varphi_i}{d\varphi_j}&=\inprod{M}{\lap \varphi_i}{\varphi_j}+\inprod{\pam}{i_\normal d\varphi_i}{i^*\varphi_j}\tag{by Green's identity~\eqref{green's identity}}\\
&=\inprod{\pam}{i_\normal d\varphi_i}{i^*\varphi_j}\tag{since $\varphi_i$ is harmonic}\\
&=\gamma \inprod{\pam}{\om_i}{\om_j}\tag{$\gamma$ denoting the eigenvalue corresponding to $\varphi_i$}\\
&=0.
\end{align*}\nopagebreak[0]
Then we see that,
\eqn{
\karsgp_1=\frac{\sum_{k\in \NN}c_k^2\bigintsss_M \ip{}{d\varphi_k}{d\varphi_k}\,\dvm}{\sum_{k\in \NN}c_k^2}.
}
Using Lemma~\ref{lem: relation bw G^p} and Green's identity~\eqref{green's identity},
\begin{align*}
    &\int_M \ip{}{d\varphi_k}{d\varphi_k}\,\dvm= \int_0^R\Big(\int_\Snm \ip{}{d\varphi_k}{d\varphi_k}\,\dvs\Big) \,h(r)^{n-1}\dr\\
    &=\int_0^R\Big(\int_\Snm \frac{\psi_k(r)^2}{h(r)^{2p+2}}\ip{}{d_\Snm\om_k}{d_\Snm\om_k}+\frac{(\psi_k'(r))^2}{h(r)^{2p}}\ip{}{\om_k}{\om_k} \,\dvs\Big) \,h(r)^{n-1}\dr\\
    &=\int_0^R\Big(\frac{\psi_k(r)^2}{h(r)^{2p+2}}\int_\Snm \ip{}{\lap_\Snm \om_k}{\om_k}\,\dvs+ \frac{(\psi_k'(r))^2}{h(r)^{2p}}\int_\Snm \ip{}{\om_k}{\om_k} \,\dvs\Big) \,h(r)^{n-1}\dr\\
    &=\int_0^R \Big(\lmp_{(m(k))}\frac{\psi_k(r)^2}{h(r)^2}+(\psi_k'(r))^2\Big)h(r)^{n-2p-1}\,\dr\tag{as $\om_k$ are chosen to be orthonormal}.
\end{align*}
Hence,
\eqn{
\karsgp_{1}=\frac{\sum_{k\in \NN}c_k^2\int_0^R \big[\lmp_{(m(k))}\frac{\psi_k(r)^2}{h(r)^2}+(\psi_k'(r))^2\big]h(r)^{n-2p-1}\,\dr}{\sum_{k\in \NN}c_k^2}
}
Since each term in the numerator is strictly increasing in $\lmpmm$, and $\lmpmm$ is strictly increasing in $m$, any increase in $m$ will result in a value greater than $\karsgp_{1}$. This argument can be repeated for higher eigenvalues, and we conclude that the eigenspace associated with $\karsgp_{(m)}$ is the same as the eigenspace of $\lmpmm$. It is also implied that $\psi_k$ is the same for each eigenform in the eigenspace for $\karsgp_{(m)}$. Finally, the claim that $\karsg_{(m)}^{(p)}=\frac{\psi'(R)}{\psi(R)}$ follows from equation~\eqref{sg expression}.
\qed
\parnoi We now prove the main theorem of this section.
\parnoi
\textbf{Proof of Theorem~\ref{thm: Xiong-type lower bound for p-forms}.}
In view of Proposition~\ref{prop: sepn of variables for one bdry}, it will suffice to show that
\eqnum{\label{main inequality of ratios}
\frac{\psi'(R)}{\psi(R)}\geq (m+p) \frac{h'(R)}{h(R)},\eqtext{for $\psi$ corresponding to $\lmpmm$,}
}as the principal curvature of $\pam$ is given by $\frac{h'(R)}{h(R)}$.
To analyse the solution $\psi$ to ODE~\eqref{ode}, we begin by rewriting the ODE as
\eqn{
\frac{1}{h(r)^{n-2p-1}}\frac{d}{dr}\big(h(r)^{n-2p-1}\psi'(r)\big)-\frac{\lmpmm\psi(r)}{h(r)^2}=0,
}
and integrate, for $p\leq \frac{n-2}{2}$, to get
\eqn{
h(r)^{n-2p-1}\psi'(r)=\lmpmm \int_0^{r} h(t)^{n-2p-3}\psi(t)\, \dt.
}
Assuming without loss of generality that $\psi$ is strictly positive in a small neighbourhood of $r=0$, we see that $\psi'(r)>0$ and $\psi(r)>0$ on $(0,R]$. Further, we calculate $\psi'(0)$.
\begin{align*}
    \psi'(0)&=\lmpmm \lim_{r\to 0}\frac{\int_0^{r} h(t)^{n-2p-3}\psi(t)\, \dt}{h(r)^{n-2p-1}}\\
    &=\lmpmm \lim_{r\to 0}\frac{h(r)^{n-2p-3}\psi(r)}{(n-2p-1)h(r)^{n-2p-2}}\\
    &=\frac{\lmpmm}{(n-2p-1)}\lim_{r\to 0}\frac{\psi(r)}{h(r)}\\
    &=\frac{\lmpmm}{(n-2p-1)}\psi'(0),\qquad\text{(as $h'(0)=1$ by Assumption~(A)).}
\end{align*}
It follows that $\psi'(0)=0$ for $p\leq \frac{n-2}{2}$. When $p=\frac{n-1}{2}$, the ODE reads as
\eqn{
\psi''=\lmpmm\frac{\psi}{h^2}.
}So,
\eqn{
\psi''(0)=\lmpmm\lim_{r\to 0}\frac{\psi(r)}{h(r)^2}=\lmpmm\lim_{r\to 0}\frac{\psi'(r)}{2h'(r)h(r)}=\frac{\lmpmm}{2}\lim_{r\to 0}\frac{\psi'(r)}{h(r)}.
}
Since $\psi''(0)$ is finite, $\psi'(0)$ has to be $0$ because $h(0)=0$.
\parnoi
As a consequence of the hypothesis that $\ric_g\geq 0$ and $\pam$ is strictly convex, the warping function satisfies (see~\cite[Lemma~8]{Xio22})
\eqnum{\label{curvature conditions implication for warping function}
h''(r)\leq 0 \quad \text{and} \quad 0<h'(r)\leq 1, \qquad \text{for}\,\,\, r\in[0,R].
}
To prove inequality~\eqref{main inequality of ratios}, consider the function
\eqn{
q(r)\df h(r)\psi'(r)-(m+p)h'(r)\psi(r) \eqtext{on $[0,R]$}.
}
Firstly, we have $q(0)=0$, since $h(0)=0$ and $\psi(0)=0$. Next,
\begin{align*}
q'&=h'\psi'+h\psi''-(m+p)h''\psi-(m+p)h'\psi'\\
&\geq h\psi''+(1-m-p)h'\psi' \eqtext{(using $h''\leq 0$ and $\psi\geq 0$)}\\
&=:u(r).
\end{align*}
Since $\psi$ satisfies the ODE~\eqref{ode}, we get
\eqn{
u=\lmpmm\frac{\psi}{h}-\frac{\lmpmm}{m+p}h'\psi',
} where $\lmpmm=(m+p)(n+m-p-2)$.
Using $\psi'(0)=0$,
\eqn{
u(0)=\lmpmm \Big(1-\frac{1}{m+p}\Big)\psi'(0)=0.
}
Further,
\begin{align}
    \frac{u'}{\lmpmm}&=\frac{\psi'}{h}-\frac{\psi h'}{h^2}-\frac{1}{m+p}h''\psi'-\frac{1}{m+p}h'\psi'' \notag\\
    &\geq\frac{\psi'}{h}-\frac{\psi h'}{h^2}-\frac{h'}{(m+p)h}\Big(\lmpmm \frac{\psi}{h}-(n-2p-1)h'\psi'\Big)\notag\\
    &=\Big(\frac{1}{h}+(n-2p-1)\frac{(h')^2}{(m+p)h}\Big)\psi'-\big(1+\frac{\lmpmm}{m+p}\big)\frac{h'\psi}{h^2}\notag\\
    &=\Big(\frac{1}{h}+(n-2p-1)\frac{(h')^2}{(m+p)h}\Big)\frac{m+p}{\lmpmm h'}\big(\lmpmm\frac{\psi}{h}-u\big) -\big(1+\frac{\lmpmm}{m+p}\big)\frac{h'\psi}{h^2}\notag\\
    &=\frac{m+p}{h^2h'}(1-(h')^2)\psi-\Big(\frac{1}{h}+(n-2p-1)\frac{(h')^2}{(m+p)h}\Big)\frac{m+p}{\lmpmm h'}u\notag\\
    &\geq -\Bigg[\frac{1}{h}+(n-2p-1)\frac{(h')^2}{(m+p)h}\Bigg]\frac{m+p}{\lmpmm h'}u.\label{technical ineq in xiong type bounds}
\end{align}
Since $p\leq \frac{n-1}{2}$, the term in the square brackets above is nonnegative. Recall that $u(0)=0$. Therefore, if $u(a)<0$ for some $a\in(0,R]$, then there exists $b\in(0,a]$ such that $u(b)<0$ and $u'(b)<0$, contradicting the inequality~\eqref{technical ineq in xiong type bounds}. Hence, we have that $u\geq 0$ on~$[0,R]$. Consequently, $q'\geq 0$ on $[0,R]$, with $q(0)=0$, giving that $q\geq 0$ on $[0,R]$. In particular, $q(R)\geq 0$. Thus, we get the required lower bounds.
\parnoi
It is evident from~\cite[Theorem~8.1]{Kar19} that the equality is attained for the Euclidean ball of radius~$R$. Furthermore, when the equality holds, i.e., $q(R)=0$, it follows that $q\equiv 0$. Then we have
\eqn{
\frac{\psi'(r)}{\psi(r)}=(m+p)\frac{h'(r)}{h(r)},\qquad \forall\, r\in [0,R].
}
We can use this to calculate
\eqn{
\psi''= (m+p)\Big[\frac{\psi'h'}{h}+\frac{\psi}{h^2}(hh''-(h')^2)\Big].
}
Using these two equations in the ODE~\eqref{ode} and simplifying, we get that
\eqn{
(m+p)hh''+\lmpmm((h')^2-1)=0.
}
But since $h''\leq 0$ and $0<h'\leq 1$ from~\eqref{curvature conditions implication for warping function}, each of the terms in this equation must be identically~$0$. Thus, we conclude that $h(r)=r$ for all $r\in [0,R]$, i.e., the equality holds only for the Euclidean ball.
\qed
\section{Proofs of Theorems\texorpdfstring{~\ref{thm: bcg ratio upperbound for p-forms}}{ 1.4} and\texorpdfstring{~\ref{thm: bcg spectral gaps upperbound for p-forms}}{ 1.5}}\label{sec: proofs of bcg thms for p-forms}
\par
The proofs of Theorems~\ref{thm: bcg ratio upperbound for p-forms}~\&\ref{thm: bcg spectral gaps upperbound for p-forms} follow on similar lines as~\cite[Theorems~1.1,~1.2,~1.5~\&~1.7]{BCG25}. We first give a variational characterisation for the eigenvalues in the case of warped products, analogous to that given in~\hbox{\cite[Section~2]{BCG25}}.
\eqnum{\label{var char in terms of coeff functions}
\karsgpkk(M,g)=\min_{\begin{subarray}{c}\psi\in H^1([0,R])\\ \psi(R)=0\end{subarray}}\frac{\int_0^R (\psi')^2 h^{n-2p-1}+\lmpkk \psi^2 h^{n-2p-3}\,\,\dr}{\psi(0)^2h(0)^{n-2p-1}},
} where $h$ is the warping function associated with the metric $g$. Note that, in view of Remark~\ref{rmk: coclosedness for test forms}, the test functions being used here need no further restriction to ensure that the corresponding test forms are coclosed with respect to $M$.
\parnoi \textbf{Proof of Theorem~\ref{thm: bcg ratio upperbound for p-forms}.} (i) Fix $n\geq3$ and $p\in \{0,\dots,n-2\}$. Let $\psi_k\in\smooth{[0,R]}$ be the function associated with the expression describing separation of variables for the eigenforms of $\karsgpkk(M)$ (see Proposition~\ref{prop: sepn of variables for one bdry}). Using $\psi_k$ as a test function in the variational characterisation~\eqref{var char in terms of coeff functions} for $\karsgpkkp$,
\begin{align}
\karsgpkkp&\leq\frac{\int_0^R (\psi_k')^2 h^{n-2p-1}+\lmpkkp \psi_k^2 h^{n-2p-3}\,\,\dr}{\psi_k(0)^2h(0)^{n-2p-1}}\notag\\
&=\frac{\int_0^R (\psi_k')^2 h^{n-2p-1}+\lmpkk \psi_k^2 h^{n-2p-3}\,\,\dr}{\psi_k(0)^2h(0)^{n-2p-1}}+\frac{\int_0^R (\lmpkkp-\lmpkk)\, \psi_k^2 h^{n-2p-3}\,\,\dr}{\psi_k(0)^2h(0)^{n-2p-1}}\notag\\
&=\karsgpkk+\frac{\lmpkkp-\lmpkk}{\lmpkk}\frac{\int_0^R \lmpkk \psi_k^2 h^{n-2p-3}\,\,\dr}{\psi_k(0)^2h(0)^{n-2p-1}}\notag\\
&\leq \karsgpkk+\frac{\lmpkkp-\lmpkk}{\lmpkk} \karsgpkk=\frac{\lmpkkp}{\lmpkk}\karsgpkk, \label{ineq in proof of ratio bounds for p-forms}
\end{align}
which implies that
\eqn{
\frac{\karsgpkkp}{\karsgpkk}\leq \frac{\lmpkkp}{\lmpkk}.
}
If the equality were to hold, we need that
\eqn{
\frac{\int_0^R(\psi_k')^2 h^{n-2p-1}\,\,\dr}{\psi_k(0)^2h(0)^{n-2p-1}}=0,
}
which is not possible as it implies that $\psi_k'=0$ almost everywhere on $[0,R]$, contradicting the ODE~\eqref{ode}.
\parnoi
For $p\leq \frac{n-3}{2}$, this bound can be shown to be sharp by constructing a sequence of warping functions~$(h_\eps)_\eps$ which take large values on most of $[0,R]$ as $\eps\to 0$, as done in~\cite[Section~3.2]{BCG25} for the case of functions. When $n\geq4$ and $p\leq \frac{n-4}{2}$, for sufficiently small $\eps\in (0,1)$, let
\eqn{
\tilde{h}_\eps (r)\df
\begin{cases}
1,& r\in [0,\eps],\\
\eps^{-\frac{1}{2(n-2p-3)}}, & r\in[(2\eps,R-2\eps],\\
R-r, & r\in [R-\eps,R],
\end{cases}
}
and $h_\eps\in \smooth{[0,R]}$ be a function that is increasing on $[\eps,2\eps]$, decreasing on $[R-2\eps,R-\eps]$ and coinciding with $\tilde{h}_\eps$ everywhere else. When $n\geq 3$ and $p= \frac{n-3}{2}$, we instead use
\eqn{
\tilde{h}_\eps (r)\df
\begin{cases}
1,& r\in [0,\eps],\\
\eps^{-\frac{1}{2}}, & r\in[2\eps,R-2\eps],\\
R-r, & r\in [R-\eps,R].
\end{cases}
}
The rest of the proof is almost the same as the proof of Theorem~1.2 in~\cite[Section~3.2]{BCG25}, replacing $n$ with $(n-2p)$ appropriately. Note that, for $p>\frac{n-3}{2}$, the term $(n-2p-3)$ present in the exponent while constructing $\tilde{h}_\eps$ would be negative, and this does not allow the proof for the sharpness to be adapted.
\parnoi
(ii) For $n\geq 2$, it can be shown by a straightforward generalisation of the discussion in~\cite[Appendix]{CGG19} that a warped product manifold with connected boundary isometric to $\Snm$ is conformal to the Euclidean unit ball, and the conformal factor can be prescribed to be identically equal to~$1$ on the boundary. So, this will be true for the manifold $(M,\frac{1}{h(0)^2}g)$, where $(M,g)$ is the manifold under consideration. Further, for a Riemannian metric $\tilde{g}$ on $M$, a postive function $\varsigma\in\smooth{M}$ and any $\om\in\Om^p(M)$,
\eqn{
\int_{(M,\tilde{g})}\langle d\om,d\om \rangle_{\tilde{g}}\,\dv_{\hspace{-.20em}\tilde{g}}=\int_{(M,\varsigma\tilde{g})}\varsigma^{\frac{n}{2}-p-1}\langle d\om,d\om \rangle_{\varsigma\tilde{g}}\,\dv_{\hspace{-.20em}\varsigma\tilde{g}}.
}
If, further, $\varsigma\equiv 1$ on $\pam$, then the Rayleigh quotients with respect to $\tilde{g}$ and $\varsigma\tilde{g}$ are equal when $p=\frac{n-2}{2}$. We thus have that for $n\geq 2$ even and $p=\frac{n-2}{2}$, for every $k\in\NN$,
\begin{align*}
\karsgpkk(M,g)&=\frac{1}{h(0)}\karsgpkk(M,\frac{1}{h(0)^2}g)\\
&=\frac{1}{h(0)}\karsgpkk(\bn)\\
&=\frac{1}{h(0)}(k+p),
\end{align*}
where the last expression follows from~\cite[Theorem~8.1]{Kar19}.
\qed
\parnoi
\textbf{Proof of Theorem~\ref{thm: bcg spectral gaps upperbound for p-forms}.} When $n\geq 4$ and $p\leq \frac{n-4}{2}$, let the warping function satisfy $h\leq C$ for some constant $C>0$. For small enough $\eps>0$ as in the proof of~\cite[Theorems~1.6~\&~1.7]{BCG25} for the case of functions, we use the test function
\eqn{
\tilde{\psi}(r)=
\begin{cases}
1, & r\in [0,R-\eps],\\
\frac{R-r}{\eps}, & r\in [R-\eps,R]
\end{cases}
}
in the variational characterisation~\eqref{var char in terms of coeff functions}, to see that
\begin{align}\label{ineq in proof of spectral gaps for p-forms}
\karsgpkk\leq &\frac{\int_0^R (\tilde{\psi}')^2 h^{n-2p-1}+\lmpkk \tilde{\psi}^2 h^{n-2p-3}\,\,\dr}{h(0)^{n-2p-1}}\\
&\leq \frac{C^{n-2p-3}}{h(0)^{n-2p-1}}\Big(R\lmpkk +\frac{4}{3}\eps\Big),\qquad\text{using that $p\leq \frac{n-4}{2}$.}\notag
\end{align}
Combining with inequality~\eqref{ineq in proof of ratio bounds for p-forms},
\begin{align*}
\karsgpkkp&\leq\karsgpkk+\frac{\lmpkkp-\lmpkk}{\lmpkk}\karsgpkk\\
& \leq \karsgpkk+\frac{(\lmpkkp-\lmpkk)RC^{n-2p-3}}{h(0)^{n-2p-1}}+\frac{\lmpkkp-\lmpkk}{\lmpkk}\frac{4\eps C^{n-2p-3}}{3h(0)^{n-2p-1}}.
\end{align*}
We get the desired upper bound by taking the limit as $\eps\to 0$. If $n\geq 3$ is odd and $p=\frac{n-3}{2}$, we use in the variational characterisation~\eqref{var char in terms of coeff functions} the test function defined above to get $$\karsgpkk\leq \frac{R\lmpkk}{h(0)^2},$$ which in combination with inequality~\eqref{ineq in proof of ratio bounds for p-forms} gives the required upper bound.
\qed
\section{Proofs of Theorems\texorpdfstring{~\ref{thm: thm 1.8 of cgg lowerbound for one bdry compt for p-forms}}{ 1.6},\texorpdfstring{~\ref{thm: thm 1.11 of cgg lower bound for two bdry compts for p-forms}}{ 1.7} and\texorpdfstring{~\ref{thm: prop3.3 of cgg misc bounds for two bdry compts for p-forms}}{ 1.8}}\label{sec: proofs of cgg thms for p-forms}

\parnoi
\textbf{Proof of Theorem~\ref{thm: thm 1.8 of cgg lowerbound for one bdry compt for p-forms}.} For a hypersurface of revolution with one boundary component, which we may view as a warped product manifold as described in Section~\ref{subsec: warped prod mfds}, the warping function satisfies $|h'(r)|\leq 1$ for $r\in [0,L]$, $h(0)=1$ and $h(L) = 0$. These would also imply that $h(r)\geq 1-r$ for $r\in [0,1]$, and that $L\geq 1$. See~\cite[Section~3]{CGG19}.
\parnoi (i) Fix $n\geq 3$ and $p\leq \frac{n-3}{2}$. Recall that we have the separation of variables for the Steklov eigenforms, from Proposition~\ref{prop: sepn of variables for one bdry}. As before, let $\{\om_k\}_{k\in \NN}$ be an orthonormal eigenbasis corresponding to $\{\lmpk\}_{k\in\NN}$, the spectrum of the Hodge Laplacian acting on the space of coclosed forms on $\Snm$. If $\phi=\sum_{j\in \NN} \alpha_j(r) \om_j(p)$ is a coclosed eigen $p$\nobreakdash-form on $M$, where $\{\alpha_j\}$ are scaled versions of $\{\psi_j\}$ such that $\inprod{\pam}{i^*\phi}{i^*\phi}=1$, we observe that the Rayleigh quotients of $\phi$ with respect to $M$ (with the metric $dr^2\oplus h(r)^2 g_\Snm$ on $[0,L]\times \Snm$) and $\bn$ (with the metric $dr^2\oplus r^2 g_\Snm$ on $[0,1]\times \Snm$) satisfy
\begin{align*}
\ray_M(\phi)&=\inprod{M}{d\phi}{d\phi}\\
&=\sum_{j\in \NN}\int_0^L (\alpha_j')^2 h^{n-2p-1}+\alpha_j^2 \lmp_j h^{n-2p-3} \,\,\dr\\
&\geq \sum_{j\in \NN}\int_0^1 (\alpha_j')^2 (1-r)^{n-2p-1}+\alpha_j^2 \lmp_j (1-r)^{n-2p-3} \,\,\dr\tag{using that $p\leq \frac{n-3}{2}$}\\
&=\ray_\bn(\phi).
\end{align*}
Let $\mathscr{E}_k$ be the span of eigenforms associated with $\{\karsgp_i(M)\}_{i=1}^k$. Restricting these eigenforms, defined on \hbox{$[0,L]\times \Snm$}, to the domain \hbox{$[0,1]\times \Snm$}, we get a $k$-dimensional space of $p$\nobreakdash-forms on~$\bn$. Noting that these belong to $H^1\Om^p(\bn)$ by a similar argument as that of~\cite[Lemma~3.4]{CGG19}, and that they are coclosed with respect to $\bn$ by Remark~\ref{rmk: coclosedness for test forms}, we use them in the min-max characterisation for the Steklov eigenvalues of $\bn$ to get
\eqn{
\karsgpk(\bn) \leq \max_{0\neq\phii\in\mathscr{E}_k} \ray_\bn(\phii)\leq \max_{0\neq\phii\in\mathscr{E}_k} \ray_M(\phii)=\karsgpk(M).
}
Further, it is easy to see that the equality holds if and only if $L=1$ and the warping function corresponds to that of $\bn$, i.e., $h(r)=1-r$.
\parnoi
(ii) When $n\geq 2$ is even and $p=\frac{n-2}{2}$, it follows from Theorem~\ref{thm: bcg ratio upperbound for p-forms} that $\karsgpk(M)=\karsgpk(\bn)$ for all $k\in \NN$.
\qed
\begin{remark}\label{rmk: why not upper bds for large p}
We used the assumption that $p\leq \frac{n-3}{2}$ in the Rayleigh quotient comparison
\eqn{\ray_M(\phi)\geq \ray_\bn(\phi)} in part~(i) of the above proof. However, it is not possible to reverse this inequality for the Rayleigh quotients by instead using $p\geq \frac{n-1}{2}$ in the above steps, since we have that $L\geq 1$ for hypersurfaces of revolution with connected boundary. This restricts us from studying optimal upper bounds for $\karsgpk(M)$ when $p\geq \frac{n-1}{2}$.
\end{remark}
\par
Before proceeding to the proofs of upper and lower bounds for hypersurfaces of revolution with two boundary components, we briefly discuss separation of variables for the Steklov eigenforms of warped product manifolds with two boundary components as Proposition~\ref{prop: sepn of variables for one bdry} is not applicable in this case. Consider the space of coclosed $p$\nobreakdash-forms that are of the form $\psi(r)\om_m(p)$ where $\om_m$ is a fixed (coclosed) eigenform of the Hodge Laplacian on $\Snm$. Then the restriction of this space to~$\pam$ is a two-dimensional real vector space spanned by
\eqn{
\theta_m\df
\begin{cases}
\om_m, & \text{on } \Snm\times \{0\},\\
0, & \text{on } \Snm\times \{L\},
\end{cases}
\qquad \text{and} \qquad \tilde{\theta}_m\df
\begin{cases}
0, & \text{on } \Snm\times \{0\},\\
\om_m, & \text{on } \Snm\times \{L\}.
\end{cases}
}
One can see that this subspace is preserved by the operator $\Lm^{(p)}$. Hence, the self-adjoint operator $\Lm^{(p)}$ restricted to this finite dimensional space will have two real eigenvalues, say, $\karsgp_{m,1}$ and $\karsgp_{m,2}$. Now, since $\{\om_m\}_{m\in \NN}$ spans the space of coclosed $p$\nobreakdash-forms on $\Snm$, we have that $\bigsqcup_{m\in \NN} \{\theta_m, \tilde{\theta}_m\}$ spans the space of coclosed forms on $\Snm \sqcup \Snm$, which should also be equal to the span of all the eigenforms of $\Lm^{(p)}$. Thus, any Steklov eigenform $\phi$ must be of the form $\phi(r,x)=\psi(r)\om_m(x)$ for appropriate $\psi\in\smooth{[0,L]}$. Note that it is not possible, in general, to obtain a useful ordering of the spectrum given by the set $\bigcup_{m\in \NN}\{\karsgp_{m,1},\karsgp_{m,2}\}$, unlike the case of warped product manifolds with connected boundary considered in Proposition~\ref{prop: sepn of variables for one bdry}.
\parnoi 
\textbf{Proof of Theorem~\ref{thm: thm 1.11 of cgg lower bound for two bdry compts for p-forms}.}\vspace{1ex}\\ \nopagebreak
\emph{(i)} When $L\geq 2$: \vspace{.5ex} \\
\hspace*{1em}Let $p\leq \frac{n-3}{2}$. We modify the proof from the case of connected boundary by instead taking the restrictions of eigenforms at both ends of the manifold, i.e, to the regions corresponding to $[0,1]\times \Snm$ and $[L-1,L]\times \Snm$. We require that $L\geq 2$ for these strips to be disjoint up to a measure zero set. Further, we would have in this case that $1-r\leq h(r)\leq 1+r$ and $h(L)-r\leq h(L-r) \leq h(L)+r$ for all $r \in (0,L)$, as described in~\cite[Section~3.1]{CGG19}. Let $\{\om_k\}_{k\in \NN}$ be an orthonormal eigenbasis corresponding to $\{\lmpk\}_{k\in\NN}$, the spectrum of Hodge Laplacian acting on the space of coclosed forms on $\Snm$. If $\phi(r,x)=\sum_{j\in \NN;\,i=1,2} \alpha_{ji}(r) \om_j(x)$ is a coclosed eigen $p$\nobreakdash-form on $M$, where $\{\alpha_{ji}\}$ are scaled such that $\inprod{\pam}{i^*\phi}{i^*\phi}=1$, we observe that the Rayleigh quotients of $\phi$ with respect to $M$ (with the metric $dr^2\oplus h(r)^2 g_\Snm$) and $\bn\sqcup \bn$ (with the metric $dr^2\oplus (1-r)^2 g_\Snm$ on $[0,1]\times \Snm$ and the metric $dr^2\oplus (r-L+1)^2 g_\Snm$ on $[L-1,L]\times \Snm$) satisfy
\begin{align*}
\ray_M(\phi)&=\inprod{M}{d\phi}{d\phi}\\
&=\sum_{j\in \NN;\, i=1,2}\int_0^L (\alpha_{ji}')^2 h^{n-2p-1}+\alpha_{ji}^2 \lmp_j h^{n-2p-3} \,\,\dr\\
&\begin{multlined}
    \geq\sum_{j\in \NN;\, i=1,2}\int_0^1 (\alpha_{ji}')^2 (1-r)^{n-2p-1}+\alpha_{ji}^2 \lmp_j (1-r)^{n-2p-3} \,\,\dr\\
    \qquad+\sum_{j\in \NN;\, i=1,2}\int_{L-1}^L (\alpha_{ji}')^2 (r-L+1)^{n-2p-1}+\alpha_{ji}^2 \lmp_j (r-L+1)^{n-2p-3} \,\,\dr \tag{using $p\leq\frac{n-3}{2}$}   
\end{multlined}\\*
&=\ray_{\bn\sqcup\,\bn}(\phi).
\end{align*}
Now, using the min-max characterisation (and Remark~\ref{rmk: coclosedness for test forms}) as in the proof of Theorem~\ref{thm: thm 1.8 of cgg lowerbound for one bdry compt for p-forms} but with the test forms being the eigenforms of $M$ restricted to $\bn\sqcup\bn$ corresponding to $r\in[0,1]$ and $r\in [L-1,L]$, we get the required lower bound $\karsgpk(M)\geq \karsgpk(\bn\sqcup\bn)$. The article~\cite{CGG19} also describes the construction of a sequence of hypersurfaces of revolution to establish that the inequality is sharp in the case of functions, which can be adapted to the framework of $p$\nobreakdash-forms with straightforward modifications.
\vspace{1.5ex}\\
\emph{(ii)} When $L\leq 2$: \vspace{.5ex}\\
\hspace*{1em}\emph{(a)} Let $p\leq \frac{n-3}{2}$. With $\phi$ as before and noting that $h(r)\geq 1-\frac{L}{2}$ (see~\cite[Section~3]{CGG19}), we have
\begin{align*}
\ray_M(\phi)&=\sum_{j\in \NN;\, i=1,2}\int_0^L (\alpha_{ji}')^2 h^{n-2p-1}+\alpha_{ji}^2 \lmp_j h^{n-2p-3} \,\,\dr\\
&\geq \sum_{j\in \NN;\, i=1,2}\int_0^L (\alpha_{ji}')^2 \Big(1-\frac{L}{2}\Big)^{n-2p-1}+\alpha_{ji}^2 \lmp_j \Big(1-\frac{L}{2}\Big)^{n-2p-3} \,\,\dr \\
&\geq \Big(1-\frac{L}{2}\Big)^{n-2p-1} \sum_{j\in \NN;\, i=1,2}\int_0^L (\alpha_{ji}')^2 +\alpha_{ji}^2 \lmp_j \,\,\dr\\
&=\Big(1-\frac{L}{2}\Big)^{n-2p-1}\ray_{C_L}(\phi).
\end{align*}
The required lower bound follows from the min-max characterisation of eigenvalues (and Remark~\ref{rmk: coclosedness for test forms}) as before, using appropriate Steklov eigenforms of $M$ as test forms on $C_L$.\vspace{.5ex}\\
\hspace*{1em}\emph{(b)} Let $p\geq \frac{n-1}{2}$. Note that the exponents 
$(n-2p-1)$ and $(n-2p-3)$ associated with the warping function $h$ in the above expressions are now nonpositive, hence the inequality between the Rayleigh quotients is reversed,
\eqn{
\ray_M(\phi)\leq \Big(1-\frac{L}{2}\Big)^{n-2p-1}\ray_{C_L}(\phi).
}
To get the upper bounds, we instead use eigenforms of $C_L$ as test forms for the manifold $M$. Let $\mathscr{E}_k$ be the span of eigenforms corresponding to $\{\karsgp_i(C_L)\}_{i=1}^k$. Then the min-max characterisation (along with Remark~\ref{rmk: coclosedness for test forms}) gives
\eqn{
\karsgpk(M) \leq \max_{0\neq \phii \in \mathscr{E}_k} \ray_M(\phii) \leq \Big(1-\frac{L}{2}\Big)^{n-2p-1} \max_{0\neq \phii \in \mathscr{E}_k} \ray_{C_L}(\phii)=\Big(1-\frac{L}{2}\Big)^{n-2p-1}\karsgpk(C_L).
} \qed

\parnoi \nopagebreak
\textbf{Proof of Theorem~\ref{thm: prop3.3 of cgg misc bounds for two bdry compts for p-forms}.}\vspace{1ex}\\ \nopagebreak
\emph{(i)} When $p\leq \frac{n-3}{2}$: \vspace{.5ex}\\
\hspace*{1em}As observed in~\cite[Proposition~3.3]{CGG19}, the warping function satisfies $h(r)\leq 1+\frac{L}{2}$ for all $r\in [0,L]$. Let $\phii_l(r,x)=\sum_{j\in \NN} \sum_{i=1,2} \alpha_{ji}(r) \om_j(x)$ be an eigenform corresponding to $\karsgp_l(C_L)$, normalised by $\inprod{\pam}{i^*\phii_l}{i^*\phii_l}=1$. Then,
\begin{align*}
\ray_M(\phii_l)&=\sum_{j\in\NN;\,i=1,2}\int_0^L(\alpha_{ji}')^2 h^{n-2p-1}+\alpha_{ji}^2 \lmp_j h^{n-2p-3} \,\,\dr\\
&\leq \sum_{j\in\NN;\,i=1,2}\int_0^L(\alpha_{ji}')^2 \Big(1+\frac{L}{2}\Big)^{n-2p-1}+\alpha_{ji}^2 \lmp_j \Big(1+\frac{L}{2}\Big)^{n-2p-3} \,\,\dr \tag{we use $p\leq \frac{n-3}{2}$}\\
&\leq \Big(1+\frac{L}{2}\Big)^{n-2p-1} \sum_{j\in\NN;\,i=1,2}\int_0^L(\alpha_{ji}')^2 +\alpha_{ji}^2 \lmp_j \,\,\dr \\
&=\Big(1+\frac{L}{2}\Big)^{n-2p-1}\karsgp_l(C_L).
\end{align*}
Hence, for any $\phi\in \mathscr{E}_k\df \text{Span}(\phii_1,\dots,\phii_k)$, the following inequality holds,
\eqn{
\ray_M(\phii)\leq \Big(1+\frac{L}{2}\Big)^{n-2p-1} \karsgpk(C_L).
}
Finally, we have from the min-max characterisation, and Remark~\ref{rmk: coclosedness for test forms}, that
\eqn{
\karsgpk(M)\leq \max_{0\neq \phii\in\mathscr{E}_k}\ray_M(\phii)\leq \Big(1+\frac{L}{2}\Big)^{n-2p-1} \karsgpk(C_L).
}
\vspace{1ex}\\
\emph{(ii)} When $p\geq \frac{n-1}{2}$: \vspace{.5ex}\\
\hspace*{1em}This time, we use appropriate Steklov eigenforms of $M$ as test forms on $C_L$. Let $\phii_l(r,x)=\sum_{j\in \NN;\,i=1,2} \alpha_{ji}(r) \om_j(x)$ be an eigenform corresponding to $\karsgp_l(M)$, normalised as usual by
\eqn{\inprod{\pam}{i^*\phii_l}{i^*\phii_l}=1.}
The $p$\nobreakdash-forms $\varphi_l$, defined on $[0,L]\times \Snm$, may also be viewed as $p$\nobreakdash-forms on the standard cylinder $C_L$ with the metric $dr^2\oplus g_\Snm$, with the associated Rayleigh quotients $\ray_{C_L}(\varphi_l)$. Then,
\begin{align*}
\Big(1+\frac{L}{2}\Big)^{n-2p-1} \ray_{C_L}(\phii_l)&=\Big(1+\frac{L}{2}\Big)^{n-2p-1}\sum_{j\in\NN;\,i=1,2}\int_0^L(\alpha_{ji}')^2 +\alpha_{ji}^2 \lmp_j \,\,\dr \\
&\leq \sum_{j\in\NN;\,i=1,2}\int_0^L(\alpha_{ji}')^2 \Big(1+\frac{L}{2}\Big)^{n-2p-1}+\alpha_{ji}^2 \lmp_j \Big(1+\frac{L}{2}\Big)^{n-2p-3} \,\,\dr\\
&\leq \sum_{j\in\NN;\,i=1,2}\int_0^L(\alpha_{ji}')^2 h^{n-2p-1}+\alpha_{ji}^2 \lmp_j h^{n-2p-3} \,\,\dr \tag{we use $p\geq \frac{n-1}{2}$}\\
&=\karsgp_l(M).
\end{align*}
Hence, for any $\phi\in \mathpzc{E}_k\df \text{Span}(\phii_1,\dots,\phii_k)$, the following inequality holds,
\eqn{
\Big(1+\frac{L}{2}\Big)^{n-2p-1} \ray_{C_L}(\phii)\leq \karsgpk(M).
}
We again have from the min-max characterisation, and Remark~\ref{rmk: coclosedness for test forms}, as usual, that
\eqn{
\Big(1+\frac{L}{2}\Big)^{n-2p-1} \karsgpk(C_L) \leq \Big(1+\frac{L}{2}\Big)^{n-2p-1} \max_{0\neq \phii\in\mathpzc{E}_k}\ray_{C_L}(\phii)\leq \karsgpk(M).
}
Analogous to~\cite[Lemma~2]{CGG19}, it is possible to express the Steklov eigenvalues of $C_L$ for $p$\nobreakdash-forms in terms of the coclosed eigenvalues of the Hodge Laplacian on the boundary and the meridian length $L$. We do not know the order in general but the spectrum is given by the set \eqn{\bigcup_{k\in\NN}\left\{\sqrt{\lmpk(\Snm)}\tanh\Big(\sqrt{\lmpk(\Snm)}\frac{L}{2}\Big), \sqrt{\lmpk(\Snm)}\coth\Big(\sqrt{\lmpk(\Snm)}\frac{L}{2}\Big)\right\}.} Thus, we see that $\karsgpk(C_L)\to 0$ as $L\to 0$.
\qed
\section*{Acknowledgement}
The author is a PhD student at the University of Bristol and is grateful to his supervisor Asma~Hassannezhad for her valuable suggestions. He also thanks Bruno Colbois and Katie Gittins for their helpful feedback on the preprint. The author acknowledges the support from the EPSRC grant~EP/T030577/1 which facilitated a visit to Bristol shortly before the commencement of his doctoral studies.



\begin{thebibliography}{GKLP22}

\bibitem[BC24]{BC24}
Jade Brisson and Bruno Colbois.
\newblock Upper bound for {S}teklov eigenvalues of warped products with fiber of dimension 2, 2024.
\newblock arXiv:2403.13620.

\bibitem[BCG25]{BCG25}
Jade Brisson, Bruno Colbois, and Katie Gittins.
\newblock Spectral ratios and gaps for {S}teklov eigenvalues of balls with revolution-type metrics.
\newblock {\em Canad. Math. Bull.}, pages 1--20, 2025.

\bibitem[BS08]{BS08}
Mikhail Belishev and Vladimir Sharafutdinov.
\newblock Dirichlet to {N}eumann operator on differential forms.
\newblock {\em Bull. Sci. Math.}, 132(2):128--145, 2008.

\bibitem[CC90]{CC90}
Bruno Colbois and Gilles Courtois.
\newblock A note on the first nonzero eigenvalue of the {L}aplacian acting on {$p$}-forms.
\newblock {\em Manuscripta Math.}, 68(2):143--160, 1990.

\bibitem[CGG19]{CGG19}
Bruno Colbois, Alexandre Girouard, and Katie Gittins.
\newblock Steklov eigenvalues of submanifolds with prescribed boundary in {E}uclidean space.
\newblock {\em J. Geom. Anal.}, 29(2):1811--1834, 2019.

\bibitem[CGGS24]{steklov_survey_two}
Bruno Colbois, Alexandre Girouard, Carolyn Gordon, and David Sher.
\newblock Some recent developments on the {S}teklov eigenvalue problem.
\newblock {\em Rev. Mat. Complut.}, 37(1):1--161, 2024.

\bibitem[CV21]{CV21}
Bruno Colbois and Sheela Verma.
\newblock Sharp {S}teklov upper bound for submanifolds of revolution.
\newblock {\em J. Geom. Anal.}, 31(11):11214--11225, 2021.

\bibitem[Esc99]{esc99}
Jos\'{e}~F. Escobar.
\newblock An isoperimetric inequality and the first {S}teklov eigenvalue.
\newblock {\em J. Funct. Anal.}, 165(1):101--116, 1999.

\bibitem[GKLP22]{GKLP22}
Alexandre Girouard, Mikhail Karpukhin, Michael Levitin, and Iosif Polterovich.
\newblock The {D}irichlet-to-{N}eumann map, the boundary {L}aplacian, and {H}\"{o}rmander's rediscovered manuscript.
\newblock {\em J. Spectr. Theory}, 12(1):195--225, 2022.

\bibitem[GP17]{steklov_survey_one}
Alexandre Girouard and Iosif Polterovich.
\newblock Spectral geometry of the {S}teklov problem (survey article).
\newblock {\em J. Spectr. Theory}, 7(2):321--359, 2017.

\bibitem[Ich81]{ich81}
Ryosuke Ichida.
\newblock Riemannian manifolds with compact boundary.
\newblock {\em Yokohama Math. J.}, 29(2):169--177, 1981.

\bibitem[JL05]{JL05}
M.~S. Joshi and W.~R.~B. Lionheart.
\newblock An inverse boundary value problem for harmonic differential forms.
\newblock {\em Asymptot. Anal.}, 41(2):93--106, 2005.

\bibitem[Kar19]{Kar19}
Mikhail~A. Karpukhin.
\newblock The {S}teklov problem on differential forms.
\newblock {\em Canad. J. Math.}, 71(2):417--435, 2019.

\bibitem[Kas83]{kas83}
Atsushi Kasue.
\newblock Ricci curvature, geodesics and some geometric properties of {R}iemannian manifolds with boundary.
\newblock {\em J. Math. Soc. Japan}, 35(1):117--131, 1983.

\bibitem[Kwo16]{Kwo16}
Kwok-Kun Kwong.
\newblock Some sharp {H}odge {L}aplacian and {S}teklov eigenvalue estimates for differential forms.
\newblock {\em Calc. Var. Partial Differential Equations}, 55(2):Art. 38, 14, 2016.

\bibitem[Mic19]{Mic19}
Deborah Michel.
\newblock Eigenvalue and gap estimates of isometric immersions for the {D}irichlet-to-{N}eumann operator acting on {$p$}-forms.
\newblock {\em C. R. Math. Acad. Sci. Paris}, 357(2):180--187, 2019.

\bibitem[Pet16]{petersen_text}
Peter Petersen.
\newblock {\em Riemannian geometry}, volume 171 of {\em Graduate Texts in Mathematics}.
\newblock Springer, Cham, third edition, 2016.

\bibitem[RS12]{RS12}
S.~Raulot and A.~Savo.
\newblock On the first eigenvalue of the {D}irichlet-to-{N}eumann operator on forms.
\newblock {\em J. Funct. Anal.}, 262(3):889--914, 2012.

\bibitem[RS14]{RS14}
S.~Raulot and A.~Savo.
\newblock On the spectrum of the {D}irichlet-to-{N}eumann operator acting on forms of a {E}uclidean domain.
\newblock {\em J. Geom. Phys.}, 77:1--12, 2014.

\bibitem[Sch95]{hodge_text}
G\"{u}nter Schwarz.
\newblock {\em Hodge decomposition---a method for solving boundary value problems}, volume 1607 of {\em Lecture Notes in Mathematics}.
\newblock Springer-Verlag, Berlin, 1995.

\bibitem[SS13]{SS13}
Vladimir Sharafutdinov and Clayton Shonkwiler.
\newblock The complete {D}irichlet-to-{N}eumann map for differential forms.
\newblock {\em J. Geom. Anal.}, 23(4):2063--2080, 2013.

\bibitem[Ste02]{ste1902}
W.~Stekloff.
\newblock Sur les probl\`emes fondamentaux de la physique math\'{e}matique (suite et fin).
\newblock {\em Ann. Sci. \'{E}cole Norm. Sup. (3)}, 19:455--490, 1902.

\bibitem[Tsc24]{Tsc24}
L\'eonard Tschanz.
\newblock Sharp upper bounds for {S}teklov eigenvalues of a hypersurface of revolution with two boundary components in {E}uclidean space.
\newblock {\em Ann. Math. Qu\'e.}, 48(2):489--518, 2024.

\bibitem[Xio21]{Xio21}
Changwei Xiong.
\newblock Optimal estimates for {S}teklov eigenvalue gaps and ratios on warped product manifolds.
\newblock {\em Int. Math. Res. Not. IMRN}, 2021(22):16938--16962, 2021.

\bibitem[Xio22]{Xio22}
Changwei Xiong.
\newblock On the spectra of three {S}teklov eigenvalue problems on warped product manifolds.
\newblock {\em J. Geom. Anal.}, 32(5):Paper No. 153, 35, 2022.

\bibitem[Xio24]{Xio24}
Changwei Xiong.
\newblock A weighted {R}eilly formula for differential forms and sharp {S}teklov eigenvalue estimates, 2024.
\newblock arXiv:2312.16780.

\bibitem[XX24]{XX23}
Chao Xia and Changwei Xiong.
\newblock Escobar's {C}onjecture on a {S}harp {L}ower {B}ound for the {F}irst {N}onzero {S}teklov {E}igenvalue.
\newblock {\em Peking Math. J.}, 7(2):759--778, 2024.

\bibitem[YY17]{YY17}
Liangwei Yang and Chengjie Yu.
\newblock Estimates for higher {S}teklov eigenvalues.
\newblock {\em J. Math. Phys.}, 58(2):021504, 9, 2017.

\end{thebibliography}
\end{document}